\definecolor{newblue}{RGB}{94,89,144}
\definecolor{newblue2}{cmyk}{1,0.6,0,0.06}
\definecolor{sorange}{RGB}{203,75,22}
\definecolor{sblue}{RGB}{38,139,210}
\definecolor{smagenta}{RGB}{211,54,130}
\numberwithin{equation}{section}
\newtheorem{theorem}{Theorem}
\newtheorem{lemma}[theorem]{Lemma}
\newtheorem{problem}[theorem]{Problem}
\theoremstyle{remark}
\newtheorem{remark}[theorem]{Remark}
\crefname{section}{Sec.}{Sec.}
\crefname{table}{Tab.}{Tab.}
\crefname{figure}{Fig.}{Fig.}
\crefname{appendix}{App.}{App.}
\crefname{equation}{}{}
\crefname{theorem}{Thm.}{Thm.}
\crefname{proposition}{Prop.}{Prop.}
\crefname{lemma}{Lem.}{Lem.}
\crefname{corollary}{Cor.}{Cor.}
\crefname{problem}{Prob.}{Prob.}
\crefname{definition}{Def.}{Def.}
\crefname{deflisti}{Def.}{Def.}
\crefname{subdeflisti}{Def.}{Def.}
\crefname{remark}{Rem.}{Rem.}
\crefname{remlisti}{Rem.}{Rem.}
\crefname{note}{Note}{Note}
\crefname{assumption}{Ass.}{Ass.}
\crefname{example}{Ex.}{Ex.}
\crefname{exlisti}{Ex.}{Ex.}
\title[Stable FEM for 3D undulatory locomotion]{A stable finite element method for low inertia undulatory locomotion in three dimensions}
\author[T. Ranner]{Thomas Ranner}
\address{School of Computing, University of Leeds, Leeds, UK. LS2 9JT}
\email{T.Ranner@leeds.ac.uk}
\thanks{\textbf{Funding.} This work was supported by a Leverhulme Trust Early Career Fellowship.}
\date{\today}
\newcommand{\abs}[1]{\left|#1\right|}
\renewcommand{\vec}[1]{\boldsymbol{#1}}
\newcommand{\id}{\mathbb{I}}
\newcommand{\K}{\mathbb{K}}
\newcommand{\R}{\mathbb{R}}
\newcommand{\rot}{\mathrm{rot}}
\newcommand{\ddt}{\frac{\mathrm{d}}{\mathrm{d} t}}
\begin{document}

\begin{abstract}
  We present and analyse a numerical method for understanding the low-inertia dynamics of an open, inextensible viscoelastic rod - a long and thin three dimensional object - representing the body of a long, thin microswimmer.
  Our model allows for both elastic and viscous, bending and twisting deformations and describes the evolution of the midline curve of the rod as well as an orthonormal frame which fully determines the rod's three dimensional geometry.
  The numerical method is based on using a combination of piecewise linear and piecewise constant finite element functions based on a novel rewriting of the model equations.
  We derive a stability estimate for the semi-discrete scheme and show that at the fully discrete level that we have good control over the length element and preserve the frame orthonormality conditions up to machine precision.
  Numerical experiments demonstrate both the good properties of the method as well as the applicability of the method for simulating undulatory locomotion in the low-inertia regime.
\end{abstract}

\maketitle

\noindent\textbf{Keywords.}{
  Kirchhoff rod; viscoelastic materials; finite element methods; biomechanics; undulatory locomotion  }

\noindent\textbf{Subject class.}{
  74S05, 74K10, 74D05, 65M60, 74L15, 92C10 }

\section{Introduction}

\subsection{Background}

The dynamics of active microswimmers has long captured the interest of physicists, mathematicians and engineers, not to mention researchers in various biological fields.
Undulation, passing bending waves down the body, is a common strategy used across many orders of magnitude \cite{Gazzola2014} ranging from bacteria \cite{Lig76,Lauga2016} and spermatozoa \cite{Fauci2006} to larger fish and whales \cite{Fish2006}.
It is especially common in the low inertia regime, where viscous forces of the surrounding media dominate over inertial forces and the scallop theorem prohibits self-propelled locomotion for time-reversal symmetric sequences of body postures \cite{Pur77,Tay52}.
For additional reading, we refer the reader to a large body
of excellent reviews on animal locomotion \cite{Gra68,Lauga2009,Cicconofri2019}, fluid dynamics at low Reynolds number \cite{Childress1981,Lighthill1975,Lig76,Brennen1977,Pur77,Yates1986,Fauci2006}, and the biophysics and biology of cell motility \cite{Berg2004,Holwill696,Jahn1972,Blum1979,Berg2000,Bray2001}.
There are many computational studies in this area which have focussed on solving a fully detailed three dimensional formulation capturing many aspects of the problem including the dynamics and its interaction with the body (e.g.\ \cite{Szigeti2014,Palyanov2016,Spagnolie2013,Elmi2017,Mujika2017,Simons2015}).
These large scale computational studies provide many interesting results giving insights on both complex fluids and animal locomotion (see, e.g., the review \cite{Lauga2009} and references therein).
An alternative is to choose a reduced, lower dimensional (in our case one dimensional) description  of the body and represent all forces on this reduced description.
This allows orders of magnitude faster simulations by considering only a one dimensional problem which  significantly reduces the numbers of computational degrees of freedom.

In this work we develop, analyse and show experiments using a novel computational approach to the evolution of an open viscoelastic rod, representing the body of a long, thin microswimmer - a three dimensional object with one axis much longer than the other two.
We assume the body is embedded in three dimensions which can undergo bending and twisting deformations.
The resulting problem can be described as a  system of one dimensional partial differential equations for a midline curve and an orthonormal frame which describes the conformation of cross sections to the midline.
The model is a natural generalisation of the locomotion model \cite{Guo2008} to three spatial dimensions.
The elastic terms we consider arise from the classical Kirchhoff-Love model for an elastic rod \cite{Kirchhoff1859} which are combined in a simple linear Voigt viscoelastic model \cite{Antman1996,LanLif75}.
Our model avoids considering shearing or extensional deformations from the full Cosserat or Timoshenko models.
In general this nonlinear system can not be solved analytically and requires the use of computer methods.
The reduction to a one dimensional object allows for significant reduction in the complexity of the resulting mathematical model in particular with respect to the computational effort required to solve the problem.

Our new method tackles three key challenges.
First when considering active locomotion one must be able to map directly between anatomical detail of the organism under consideration and the geometry defined by the model. For example, we should be able to clearly identify where the muscles are located and in which direction do they apply force.
Our approach to capture this results in equations for the midline of the rod coupled to equations for the orientation of the cross section of the rod (see the discussion in \cite{Langer1996} and \cref{sec:application} for more details). These equations must be carefully coupled to ensure that we have an accurate and robust representation of the geometry which allows us to apply the biological forces appropriately.
Second we have a moving geometry which is a priori unknown. Our scheme captures this with a parameterization which is equivalent to a moving mesh. As is typical in this type of problem we must ensure that the moving mesh does not become too distorted (see, for example, \cite{Elliott2016,Barrett2011a} and references therein).
Finally, in many biological systems various parameters and problem data may be unknown or poorly characterized. Any computational method should be both cheap enough to run so that parameterization studies may be run and also robust to input parameters so that a wide variety of behaviours can be observed.
See \cite{Stuart2010,kaipio2006statistical} for example.
We will demonstrate through analysis of our method and numerical experiments that we can address all three challenges.

Similar models to those considered in this paper arise in many areas of natural science and engineering.
For example, similar models have been considered for elastic ribbons and filaments \cite{bergou2008discrete,Bergou2010,Ftterer2012},
tangled hair and fibres \cite{Bertails2006,Ward2007,Daviet2011,Durville2005},
plants \cite{Goriely1998,Gerbode2012}, and
woven cloth \cite{Kaldor2008}.
A historical overview of the model used in this paper is given in \cite{dill1992kirchhoff}.

We are interested in the locomotion of low inertia microswimmers.
We explore the applicability of our method through the case study of the 1mm long nematode \textit{Caenorhabditis elegans}.
This animal has become a model organism studied by physicists, computer scientists and engineers partly due to its simple, undulating, periodic gait but also its experimental tractability and simple neuroanatomy \cite{Cohen2014}.
In its natural environment, \textit{C.\ elegans} grows mostly in rotten vegetation: a highly complex, heterogeneous three dimensional environment. It is cultured on the surface of agar gels for extensive use in laboratories.
On agar, \textit{C.\ elegans} move by propagating bending waves from head to tail to generate forward thrust \cite{GraLis64,Cro70,PieCheMun08,BerBoyTas09,FanWyaXie10,Lebois2012} where the bending arises from body wall muscles which line sides of the body.
A two dimensional version of our viscoelastic model \cite{Guo2008}, developed originally for capturing worms or snakes moving across land, has previously been applied to \textit{C. elegans} locomotion both on surfaces and in a range of both Newtonian and non-Newtonian media \cite{SznPurKra10,Sznitman2010,FanWyaXie10,CohRan17-pp,Denham2018}.
However, previous formulations have either linearized the equations, which means although postures are recovered trajectories are not, or viscosity is neglected, which limits the applicability of the model in less resistive environments.
Recent experiments \cite{Bilbao2018,Shaw2018} have shown that \textit{C. elegans} achieves a different gait in three dimensions.
Here we demonstrate that our computational tool is capable of accurately and robustly capturing such behaviours.
The simulation results are meant to be indicative of the capabilities of the method rather than demonstrating any properties of the underlying model which is left to future work.
The numerical method used in this section builds on the unpublished work \cite{CohRan17-pp} (see also \cite{Denham2018}).

\subsection{Model}
\label{sec:intro-model}

In our model, the rod is described through a smooth, time-dependent parameterization of the midline $\vec{x} \colon [0,1] \times [0,T] \to \R^3$ and an orthonormal frame $\vec{e}^0, \vec{e}^1, \vec{e}^2 \colon [0,1] \times [0,T] \to \R^3$ up to some final time $T>0$.
We call $u \in [0,1]$ the material parameter.
Since we do not allow shear deformations, we set $\vec{e}^0 \equiv \vec\tau := \vec{x}_u / \abs{ \vec{x}_u }$ the unit tangent to the midline. The other two coordinates $\vec{e}^1$ and $\vec{e}^2$ form an orthonormal basis of the cross section of the rod to the midline. See \cref{fig:geometry}. We can form the skew system:
\begin{align*}
  \frac{1}{\abs{\vec{x}_u}} \vec{e}^{0}_u
  = \alpha \vec{e}^{1} + \beta \vec{e}^{2}, \qquad
  \frac{1}{\abs{\vec{x}_u}} \vec{e}^{1}_u
  = - \alpha \vec{e}^{0} + \gamma \vec{e}^{2}, \qquad
  \frac{1}{\abs{\vec{x}_u}} \vec{e}^{2}_u
  = - \beta \vec{e}^{0} - \gamma \vec{e}^{1},
\end{align*}
where $\alpha$ and $\beta$ are smooth fields denoting the curvature of the midline in directions $\vec{e}^{1}$ and $\vec{e}^{2}$ and $\gamma$ is a smooth field denoting the twist of the orthonormal frame about the midline.
At the core of the model is a moment which is the sum of elastic and viscous contributions.
The elastic terms are proportional to differences between the fields $\alpha, \beta, \gamma$ and some desired values $\alpha^0, \beta^0, \gamma^0$.
The viscous contribution is proportional to the time derivatives $\alpha_t, \beta_t, \gamma_t$.

Inertial terms are ignored and external forces are simply modelled as linear drag terms \cite{Keller1976}.
This model can be seen as a quasi-static or low inertia approximation of the full rod dynamics.
Although this very simple approach to modelling external forces for undulatory locomotion has been used previously in undulatory locomotion models (see the references above),
a more accurate model would include non-local terms \cite{Keller1976,Lighthill1975} or capture the external fluid using dynamic equations  (e.g.\ \cite{Cortez2018,schoeller2019methods,MontenegroJohnson2016,Lim2012}).
Any model for the surrounding fluid must be coupled to a model of the body.
The focus in this work is to accurately capture the body mechanics combined with a simple model of surrounding fluid.
Study of a more detailed model of the environment coupled to the model of body mechanics presented here is left to future work.

Our model can be seen as a simplification of the model presented in \cite{Lang2010,Lang2012} or a three dimensional version of \cite{Guo2008}.
Full details of the model are given in \cref{sec:model}.

\subsection{Computational method}
\label{sec:intro-numerics}

Our approach is to discretize an appropriate formulation of the continuous equations directly.
This allows us to use the structure of the equations to recover a robust numerical scheme through careful discretization choices.
The discretization extends the approach of \cite{Lin2004} to the case of non-constant twist and open curves.

Key to our numerical method is a well suited formulation of the continuous partial differential equation system.
We start from the balance laws for linear and angular momentum and the linear viscoelastic constitutive law. This is combined with geometric constraints so that the solution variables are the position of the midline, line tension (a Lagrange multiplier for enforcing the length constraint), the curvature of the midline, the twist and angular velocity of the frame and two auxiliary variables which describe the bending and twisting moments.
The continuous system of equations is discretized in space by a mixed finite element method where we use a mix of piecewise linear and piecewise constant approximation spaces.
We use a Lagrange multiplier to enforce inextensibility (i.e.\ that the length of the curve is locally fixed) in a similar approach to \cite{Bar13}.
Finally, we discretize in time using a semi-implicit method which results in a linear system of equations to solve at each time step, inspired by the approach in \cite{Dzi90,DziKuwSch02,Lin2004} (see also \cite{Barrett2011a}), together with an update formula for the frame.
The use of lower order finite element approximations, along with mass lumping \cite{Tho84}, allows us to derive identities for various geometric quantities at the mesh vertices.
The frame is updated using a Rodrigues formula where the rotation is specified by the frame's angular moment which can be derived from the solution variables.
We use the angular momentum and Rodrigues formula as an alternative to using Euler angles \cite{Schwab2006} or quaternions \cite{Lang2010,Lang2012}.
Although direct use of the Rodrigues formula is not recommended for numerical applications in general \cite{Bauchau2003}, we will show that our formulation avoids problems for any angles.
More details of the numerical scheme are given in \cref{sec:method}.

We will demonstrate three key properties of our scheme:
\begin{itemize}
\item a semi-discrete stability result (coupled with computational evidence of fully discrete stability) which shows that we recover a discrete Lyapunov functional for our scheme;
\item control over the length element which ensures that vertices in the moving mesh do not collide;
\item preservation of the frame orthogonality conditions to ensure that the frame really does remain orthonormal over time.
\end{itemize}
We see these three properties as allowing our method to provide a computational tool both for the understanding of viscoelastic rods and also for domain experts working on undulatory locomotion in the low inertia regime.
Further work is required to link this model for undulatory locomotion with a more detailed model of the surrounding fluid which would give a more accurate model and allow a greater variety of behaviours to be captured. Where such links are to be used a balance must be struck between the computational efficiency of a one dimensional approach against the greater detail provided by a fully three dimensional model.

In our previous unpublished work \cite{CohRan17-pp}, we presented a similar scheme (also used in \cite{Denham2018}).
This paper builds on that work extending the scheme to three dimensions including twisting as well as bending contributions and generalising the material law to include viscous terms.

There are several very successful methods from discrete differential geometry which solve either the problem we consider or a generalisation.
In this approach the rod is first discretized and then equations are derived to evolve those discrete quantities.
We mention in particular the approach of \cite{bergou2008discrete} (extended to viscous threads in \cite{Bergou2010,Audoly2013}) which uses a discrete set of vertices (in our notations a piecewise linear curve) with a material frame on each edge between the vertices (piecewise constant in our notation) to represent a rod which was stored using a reduced curve-angle formulation \cite{Langer1996}.
The approach uses curvature and twist defined as integrated quantities based at vertices to define a Kirchhoff-Love energy and then applies discrete parallel transport and variation of holonomy to derive the update equations.
This work was generalized to Cosserat rods by \cite{Gazzola2018} who revert away from the reduced curvature-angle formulation storing the full frame.
The super-helix and super-clothoid approaches \cite{Bertails2006,Casati2013} use a piecewise constant or piecewise linear approximation of generalized curvature of a rod and then recover the geometry of the rod (position of midline and frame) using analytic expressions.
The scheme results in a smooth curve with well defined curvatures and twist.

In the context of these schemes our model can be seen as using a set of vertices with a frame at each vertex to define the discrete geometry of the rod.
The equations are straight discretization of the continuous scheme using a finite element approach and discrete equations to define curvature and relate twist with tangential angular velocity.
Our choice to discretize continuum equations allows us to use standard finite element tools to both analyse and implement the method in a single unified framework.
This approach is well suited to the parabolic nature of the equations we consider.
It is our particular choice of both geometric discretization and direct discretization of the equations that allows us to demonstrate the properties of our scheme.

\subsection{Outline}

In \cref{sec:model}, we present the continuous model we use and the discretization is shown in \cref{sec:method}. Numerical experiments to demonstrate the efficacy of the method are shown in \cref{sec:results}.
The restriction of our scheme to a two dimensional problem is given in \cref{sec:2d-problem}.

\section{Governing equations}
\label{sec:model}

\subsection{Geometry}

We consider a smooth, inextensible, unshearable rod embedded in $\R^3$ over a time interval $[0,T]$ for some $0 < T < \infty$.
The rod can be described by a (non-arc length) parameterization of the centre line $\vec{x} \colon [0,1] \times [0,T] \to \mathbb{R}^3$ and an oriented frame of reference $\vec{Q} \colon [0,1] \times [0,T] \to SO(3)$.
For a discussion of rod representations see \cite{Langer1996}.
We will write derivatives with respect to the first coordinate, the material coordinate $u$, and the second coordinate, time $t$, with subscripts $(\cdot)_u$ and $(\cdot)_t$, respectively.
Our assumptions imply $\abs{ \vec{x}_u }_t = 0$ so that the length of the midline is fixed. We call the length of the midline curve $L$.

Rather than use the tensor $\vec{Q}$, we will use the equivalent orthonormal triad of unit vectors $\vec{Q} = \{ \vec{e}^0, \vec{e}^1, \vec{e}^2 \}$.
We will use the convention for an unshearable rod that $\vec{e}^0 \equiv \vec\tau$ the unit tangent to the centre line given by $\vec\tau = \vec{x}_u / | \vec{x}_u |$. See \cref{fig:geometry} for an example rod conformation.

\begin{figure}[tbhp]
  \centering
  \includegraphics[width=0.5\textwidth]{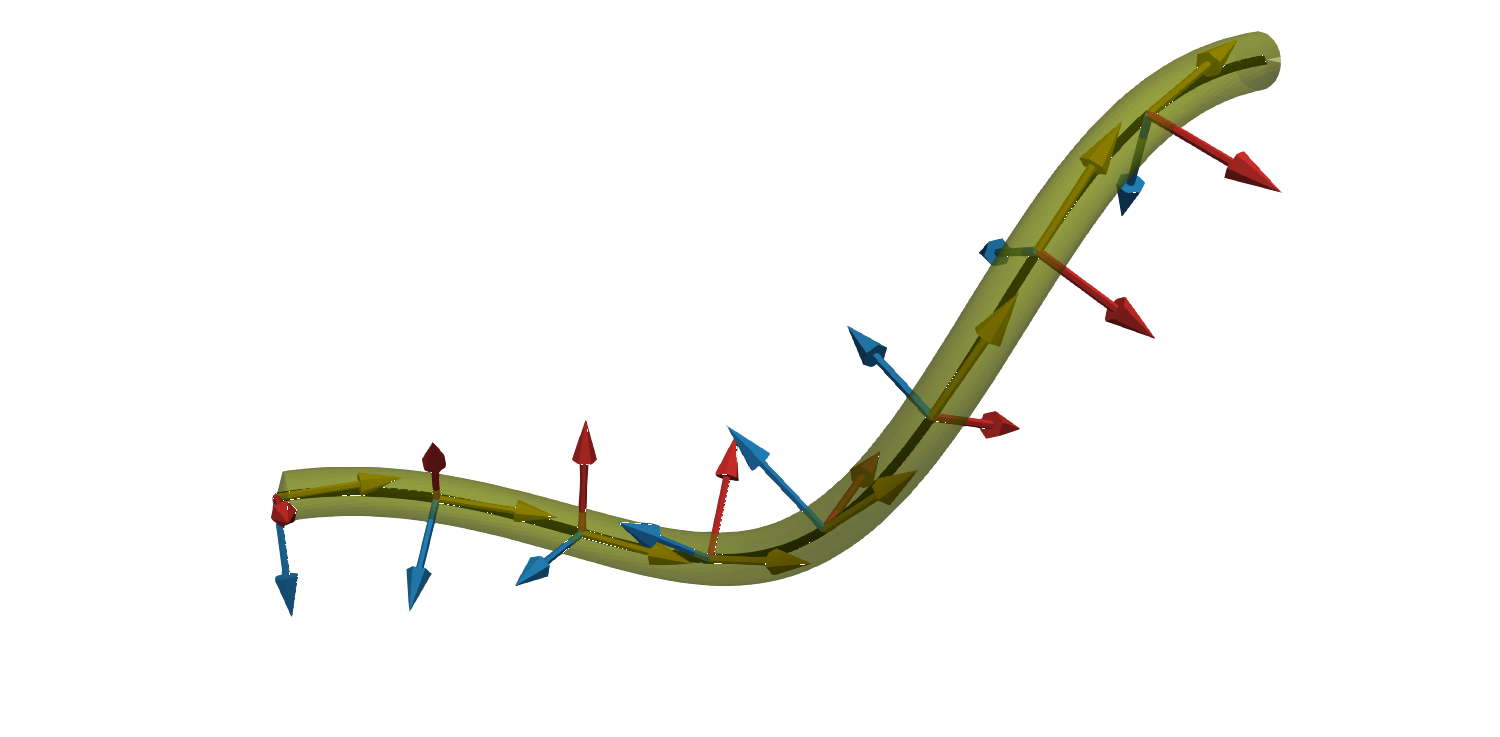}
  \caption{An illustration of a conformation of a rod. The image shows a (green) shaded three dimensional region which can be parameterized by a midline curve $\vec{x}$ (black) and an orthogonal triad of vectors $\vec{e}^{0}$ (yellow), $\vec{e}^{1}$ (red), and $\vec{e}^{2}$ (blue).
  The fields $\alpha$ and $\beta$ represent the variation (along the curve) of the tangent vector $\tau = \vec{e}^{0}$ and $\gamma$ represents the rotation of the pair $\vec{e}^{1}, \vec{e}^{2}$ about the tangent vector $\tau = \vec{e}^{0}$.}
  \label{fig:geometry}
\end{figure}

We can recover the generalized curvature (sometimes called the Darboux vector) $\vec\Omega \colon [0,1] \times [0,T] \to \R^3$ which satisfies
\begin{equation}
  \label{eq:darboux}
    \frac{1}{\abs{\vec{x}_u}} \vec\tau_u
    = \vec\Omega \times \vec\tau, \qquad
    \frac{1}{\abs{\vec{x}_u}} \vec{e}^{1}_u
    = \vec\Omega \times \vec{e}^{1}, \qquad
    \frac{1}{\abs{\vec{x}_u}} \vec{e}^{2}_u
    = \vec\Omega \times \vec{e}^{2}.
\end{equation}
We decompose $\vec\Omega = \vec\tau \times \vec{\kappa} + \gamma \vec\tau$. We call $\vec{\kappa} = \vec\tau_u/\abs{\vec{x}_u}$ the vector curvature with decomposition $\vec{\kappa} = \alpha \vec{e}_1 + \beta \vec{e}_2$ for fields $\alpha = \vec{\kappa} \cdot \vec{e}_1, \beta = \vec{\kappa} \cdot \vec{e}_2$ and call $\gamma$ the twist.
Further, if the orthonormal triad transforms smoothly in time, then we recover the angular velocity of the frame $\vec\omega$ given by
\begin{equation}
  \label{eq:angular-velocity}
  \vec\tau_t = \vec\omega \times \vec\tau, \qquad
  \vec{e}^{1}_{t} = \vec\omega \times \vec{e}^{1}, \qquad
  \vec{e}^{2}_{t} = \vec\omega \times \vec{e}^{2}.
\end{equation}
Again, we decompose $\vec\omega = \vec\tau \times \vec\tau_t + m \vec\tau$.
The field $m$ denotes the angular velocity of the frame about the tangent vector field which must be computed separately to the centre line velocity.

There is a geometric relation between the strain vector $\vec\Omega$ and the angular velocity $\vec\omega$ \cite{Wolgemuth2004}.
Since the $u$ and $t$ derivatives commute, using the inextensibility of the parameterization (i.e.\ $\abs{\vec{x}_u}_t=0$), we can compute that
\begin{equation}
  \label{eq:deriv-1}
  \frac{1}{\abs{\vec{x}_u}} \vec\omega_u - \vec{\Omega}_t
  = \vec\omega \times \Omega.
\end{equation}
Next we subtract the time derivative of $\gamma = \vec\Omega \cdot \vec\tau$ from the scaled $u$ derivative (i.e. arc-length derivative) of $m = \vec\omega \cdot \vec\tau$ to see
  \[
    \frac{m_u}{\abs{\vec{x}_u}} - \gamma_t
    = \left( \frac{\vec{\omega}_u}{\abs{\vec{x}_u}} - \vec{\Omega}_t \right) \cdot \vec{\tau} + \vec{\omega} \cdot \frac{\vec{\tau}_u}{\abs{\vec{x}_u}} + \vec{\Omega} \cdot \vec{\tau}_t.
  \]
  Applying \cref{eq:deriv-1,eq:angular-velocity}, the definition of $\vec{\kappa}$ and rearranging the vector triple product gives
  \[
 \frac{m_u}{\abs{\vec{x}_u}} - \gamma_t
    = \vec\tau \cdot ( \vec\omega \times \vec\Omega ) + \vec\omega \cdot \vec\kappa + \vec\tau \cdot ( \vec\Omega \times \vec\omega )
 = \vec{\omega} \cdot \vec\kappa.
\]
Then applying the definition of $\vec\omega$ and rearranging the resulting vector triple product, we see that
\begin{equation}
  \label{eq:equation-gamma}
  \frac{m_u}{\abs{\vec{x}_u}} = \gamma_t - \frac{\vec{x}_{tu}}{\abs{\vec{x}_u}} \cdot \vec{\tau} \times \vec{\kappa}.
\end{equation}
This final equation is important for providing a closing relation between the angular velocity and twist of the frame.

\subsection{Model derivation}

For an inextensible, unshearable rod we can write down the conservation of linear and angular momentum as \cite{LanLif75}:
\begin{equation}
  \label{eq:balance}
  \vec{K} + \frac{1}{\abs{\vec{x}_u}} \vec{F}_u = \vec{0}, \qquad
  \vec{K}^\rot + \vec\tau \times \vec{F} + \frac{1}{\abs{\vec{x}_u}} \vec{M}_u = \vec{0},
\end{equation}
where $\vec{K}$ is the external force, $\vec{F}$ is the internal force resultant, $\vec{K}^\rot$ is the external moment and $\vec{M}$ is the internal moment.

We assume that the rod is viscoelastic with preferred curvatures and preferred twist so that the internal moment is given by a linear Voigt model:
\begin{equation}
  \label{eq:moment}
  \vec{M} = \vec\tau \times \Bigl\{ A \bigl( ( \alpha - \alpha^0 ) \vec{e}^{1} + ( \beta - \beta^0 ) \vec{e}^{2} \bigr)
  + B ( \alpha_t \vec{e}^{1} + \beta_t \vec{e}^{2} ) \Bigr\}
  + C ( \gamma - \gamma^0 ) \vec\tau + D \gamma_t \vec\tau.
\end{equation}
Here $\alpha^0, \beta^0$ and $\gamma^0$ are given fields which we allow to depend on the parameter $u$ and time $t$.
We call $\alpha^0$ and $\beta^0$ preferred curvatures and $\gamma^0$ a preferred twist.
The material parameters, which we allow to depend on the parameter $u$ but not time $t$, are $A$ the bending modulus, $C$ the twisting modulus, $B$ the bending viscosity and $D$ the twisting viscosity.
Note that the material parameters will depend on the precise geometry of the cross section of the rod \cite{Lang2012}.
We assume that $A = A(u) \ge A_0 > 0$, $B = B(u) \ge 0$, $C = C(u) > C_0 > 0$ and $D = D(u) \ge 0$.
We introduce the variables $\vec{y}$, the bending moment, and $z$ the twisting moment given by
\begin{subequations}
  \label{eq:moment-split}
  \begin{align}
    \vec{y} & = A \bigl( ( \alpha - \alpha^0 ) \vec{e}^{1} + ( \beta - \beta^0 ) \vec{e}^{2} \bigr)
              + B ( \alpha_t \vec{e}^{1} + \beta_t \vec{e}^{2} ) \\
    z & = C ( \gamma - \gamma_0 ) + D \gamma_t,
  \end{align}
\end{subequations}
so that the moment is given by
\begin{equation}
  \label{eq:moment-simple}
  \vec{M} = \vec\tau \times \vec{y} + z \vec{\tau}.
\end{equation}
We assume that the tangential forces, $p \vec\tau$, act as a Lagrange multiplier to enforce the inextensibility constraint:
\begin{equation}
  \label{eq:inextensibility}
  \abs{ \vec{x}_u }_t = \vec\tau \cdot \vec{x}_{tu} = 0.
\end{equation}
We assume a linear drag response from the environment onto the rod by
\begin{equation}
  \label{eq:drag}
  \vec{K} = \K \vec{x}_t, \qquad
  \vec{K}^\rot = - K^\rot m \vec{\tau},
\end{equation}
with strictly positive definite matrix $\K$  and strictly positive scalar coefficient $K^\rot$.
Our model of drag is inspired by resistive force theory \cite{Keller1976}.

We combine the above model components in a way that is well suited to numerical computation.
We start by calculating from \cref{eq:moment-simple} that
\begin{align*}
  \frac{1}{\abs{\vec{x}_u}} \vec{M}_u =
  \vec\kappa \times \vec{y} + \vec\tau \times \frac{1}{\abs{\vec{x}_u}} \vec{y}_u
  + \frac{1}{\abs{\vec{x}_u}} \vec{z}_u \vec\tau + z \vec\kappa.
\end{align*}
Noting that $\vec\tau \cdot \vec\kappa = \vec\tau \cdot \vec{y} = \vec\tau \times ( \vec\kappa \times \vec{y} ) = 0$, we infer
\begin{subequations}
  \label{eq:deriv-2}
  \begin{align}
    \label{eq:deriv-2a}
    \vec\tau \cdot
    \frac{1}{\abs{\vec{x}_u}} \vec{M}_u
    & = \vec\tau \cdot ( \vec\kappa \times \vec{y} ) + \frac{1}{\abs{\vec{x}_u}} \vec{z}_u \vec\tau \\
    \label{eq:deriv-2b}
    \vec\tau \times
    \frac{1}{\abs{\vec{x}_u}} \vec{M}_u
    & = ( \id - \vec\tau \otimes \vec\tau ) \frac{\vec{y}_u}{\abs{\vec{x}_u}}
      + z \vec\tau \times \vec\kappa,
  \end{align}
\end{subequations}
Here $\id$ is the $3 \times 3$ identity matrix and $\otimes$ is the outer product given by $( \vec{a} \otimes \vec{b} )_{ij} = \vec{a}_i \vec{b}_j$ for $i,j=1,2,3$, $\vec{a}, \vec{b} \in \R^3$ which satisfies $\vec\tau \times ( \vec\tau \times \vec{a} ) = ( \id - \vec\tau \otimes \vec\tau) \vec{a}$ for all $a \in \R^3$.

We decompose the internal force resultant into tangential and normal components by $\vec{F} = p \vec{\tau} + \vec{f}$.
We call $p$ the pressure and $\vec{f}$ the normal force resultant.
We then use the force decomposition and \cref{eq:drag} in the linear momentum balance \cref{eq:balance}:
\begin{equation}
  \label{eq:deriv-3a}
  \K \vec{x}_t + \frac{1}{\abs{\vec{x}_u}} ( p \vec\tau )_u + \frac{1}{\abs{\vec{x}_u}} \vec{f}_u = \vec{0}.
\end{equation}
From taking cross product of $\vec\tau$ with the angular momentum balance \cref{eq:balance} and then the arc-length derivative (the scaled $u$ derivative), we infer:
\begin{align}
  \label{eq:deriv-3b}
  \frac{1}{\abs{\vec{x}_u}} \vec{f}_u + \frac{1}{\abs{\vec{x}_u}} \left( \frac{1}{\abs{\vec{x}_u}} \vec\tau \times \vec{M}_u \right)_u & = \vec{0}.
\end{align}
Next, we take the scalar product of $\vec\tau$ with the angular momentum balance \cref{eq:balance} using \cref{eq:drag} to see:
\begin{align}
  \label{eq:deriv-3c}
  -K^\rot m + \frac{1}{\abs{\vec{x}_u}} \vec{M}_u \cdot \vec{\tau} & = 0.
\end{align}
Finally we apply the expressions for derivatives of $\vec{M}$ from \cref{eq:deriv-2} in \cref{eq:deriv-3a}, \cref{eq:deriv-3b} and \cref{eq:deriv-3c} and combine with \cref{eq:inextensibility} to give our model:
\begin{subequations}
  \label{eq:model}
  \begin{align}
    \label{eq:model-x}
    \K \vec{x}_t + \frac{1}{| \vec{x}_u |} \bigl( p \vec\tau \bigr)_u
    + \frac{1}{|\vec{x}_u|} \bigl( ( \id - \vec\tau \otimes \vec\tau) \frac{ \vec{y}_u }{| \vec{x}_u |}  + z \vec\tau \times \vec{\kappa} \bigr)_u & = \vec{0}
    \\
    \label{eq:model-m}
    - K^\rot m
    + \frac{ z_u }{| \vec{x}_u |} + \vec{y} \cdot ( \vec\tau \times \vec{\kappa} ) & = 0
    \\
    \label{eq:model-p}
    \vec\tau \cdot \vec{x}_{tu} & = 0.
  \end{align}
  For boundary conditions we assume that each end of the rod is free so we enforce zero force and zero moment at $u=0,1$:
  \begin{align}
    \label{eq:model-bc0}
  p \vec\tau
    + ( \id - \vec\tau \otimes \vec\tau) \frac{\vec{y}_u}{| \vec{x}_u |}  + z \vec\tau \times \vec{\kappa} & = \vec{0} && \mbox{ at } u=0,1\\
    \label{eq:model-bc1}
  \vec\tau \times \vec{y} + z \vec\tau & = \vec{0} && \mbox{ at } u=0,1.
  \end{align}
\end{subequations}

\begin{remark}
  The system of partial differential equations \cref{eq:model} can be seen as a nonlinear fourth order parabolic equation for the parameterization $x$, subject to the nonlinear inextensibility constraint, coupled to a nonlinear second order parabolic equation for the twist $\gamma$.
\end{remark}

\subsection{Weak form}

We will write down a weak form which we will use for our finite element method in the next section.
When writing down the weak formulation, we combine equations for the conservation laws \cref{eq:model-x} and \cref{eq:model-m}, the constitutive laws \cref{eq:moment-split}, the geometric relation \cref{eq:equation-gamma}, and the inextensibility constraint \cref{eq:model-p}.
When writing down the constitute equation we add the Laplace-Beltrami identity for $\vec{\kappa}$ the vector curvature:
\begin{equation}
  \label{eq:model-w}
  \vec{\kappa} = \frac{1}{\abs{\vec{x}_u}} \vec{\tau}_u = \frac{1}{\abs{\vec{x}_u}} \left( \frac{ \vec{x}_u }{ \abs{ \vec{x}_u }} \right)_u.
\end{equation}
We must also impose boundary conditions for $\vec{\kappa}$, since the curvature is not well defined at $u=0,1$, which we set to be equal to the prescribed curvatures here:
\begin{equation}
  \label{eq:model-w-b}
  \vec{\kappa} = \alpha^0 \vec{e}^{1} + \beta^0 \vec{e}^{2} \qquad \mbox{ at } u = 0,1.
\end{equation}
We also note that for the bending viscosity terms we can write
\[
  \alpha_t \vec{e}^{1} + \beta_t \vec{e}^{2} = ( \id - \vec\tau \otimes \vec\tau ) \vec{\kappa}_t - m \vec\tau \times \vec{\kappa}.
\]
We derive the weak form of the problem by multiplying by appropriate test functions and integrating over the centre line.

We let $Q = L^2( 0, 1 )$ denote the space of square integrable functions on $(0,1)$,
$V = H^1( 0, 1 )$ the Sobolev space of functions in $L^2(0,1)$ with a weak derivative in $L^2(0,1)$
and $V_0$ the space of functions in $V$ with zero trace \cite{Evans2010}.
Unless otherwise stated, integrals are with respect to the measure $\mathrm{d} u$.

\begin{problem}
  Given preferred curvatures, $\alpha^0, \beta^0$, a preferred twist, $\gamma^0$, and initial conditions for the parameterization $\vec{x}^0$ and frame $\vec{e}^{1,0}, \vec{e}^{2,0}$,
  find $\vec{x} , \vec{y}, \vec{\kappa} \colon [0,1] \times [0,T) \to \R^3$ (with the conditions \cref{eq:model-bc0,eq:model-bc1,eq:model-w-b} at the boundaries),
$m, z, \gamma \colon [0,1] \times [0,T) \to \R$, and
$\vec{e}^{1}, \vec{e}^{2} \colon [0,1] \times [0,T) \to \R^3$ such that, for almost every $t \in (0,T)$:
\begin{subequations}
  \label{eq:weak}
\begin{align}
  \label{eq:weak-x}
  \int_0^1 \K \vec{x}_t \cdot \vec{\phi} | \vec{x}_u |
  - \int_0^1 p \vec\tau \cdot \vec{\phi}_u
  - \int_0^1 \bigl( ( \id - \vec\tau \otimes \vec\tau ) \frac{1}{| \vec{x}_u |} \vec{y}_u  + z \vec\tau \times \vec{\kappa} \bigr) \cdot \vec{\phi}_u
  & = 0 \\
\label{eq:weak-y}
  \int_0^1 \bigl( \vec{y} - A ( \vec{\kappa} - \alpha^0 \vec{e}_{1} - \beta^0 \vec{e}_{2} )
  - B \bigl( ( \id - {\vec\tau} \otimes {\vec\tau} ) \vec{\kappa}_{t} - m {\vec\tau} \times \vec{\kappa} \bigr)
  \bigr) \abs{ \vec{x}_{u} }
   & = 0 \\
  \label{eq:weak-w}
  \int_0^1 \vec{\kappa} \cdot \vec\psi | \vec{x}_u | + \frac{\vec{x}_u}{ | \vec{x}_u | } \cdot \vec\psi_u & = 0
\end{align}
for all $\vec\phi \in V^3, \vec\psi \in V_0^3$,
\begin{align}
  \label{eq:weak-m}
  \int_0^1 - K^\rot m v | \vec{x}_u | + \int_0^1 \vec{y} \cdot ( \vec\tau \times \vec{\kappa} ) v | \vec{x}_u |
  - \int_0^1 z v_u & = 0, \\
  \label{eq:weak-z}
  \int_0^1 ( z - C ( \gamma - \gamma^0 ) - D \gamma_t ) q | \vec{x}_u | & = 0, \\
  \label{eq:weak-gamma}
  \int_0^1 \gamma_t q | \vec{x}_u |
  - \int_0^1 m_u q
  + \int_0^1 \vec\tau \times \vec{\kappa} \cdot \vec{x}_{tu} q & = 0
\end{align}
for all $q \in Q$ and $v \in V$,
\begin{align}
  \label{eq:weak-p}
  \int_0^1 q \vec\tau \cdot \vec{x}_{tu} & = 0,
\end{align}
for all $q \in Q$,
and
\begin{align}
  \int_0^1 \bigl( \vec{e}^{j}_{t} - \bigl( \vec\tau \times \vec\tau_t + m \vec\tau \bigr) \times \vec{e}^{j} \bigr) \cdot \vec{\phi} \abs{ \vec{x}_{u} } = 0, \quad \mbox{ for } j = 1,2,
\end{align}
for all $\vec{\phi} \in V^3$,
subject to the initial conditions
\begin{equation}
  \vec{x}( \cdot, 0) = \vec{x}^0, \qquad
  \vec{e}^1( \cdot, 0 ) = \vec{e}^{1,0}, \qquad
  \vec{e}^2( \cdot, 0 ) = \vec{e}^{2,0},
\end{equation}
and initial equations
\begin{align}
  \int_0^1 \vec{\kappa}(\cdot, 0) \cdot \vec\psi | \vec{x}^0_u |
  + \frac{\vec{x}^0_u}{ | \vec{x}^0_u | } \cdot \vec\psi_u
  & = 0 && \mbox{ for all } \vec\psi \in V_0^3 \\
  \int_0^1 ( \gamma(\cdot, 0) - \frac{\vec{e}^{1,0}_{u}}{\abs{\vec{x}^0_u}} \cdot \vec{e}^{2,0} ) v \abs{\vec{x}^0_{u}}
  & = 0 && \mbox{ for all } v \in V.
\end{align}
\end{subequations}
\end{problem}

\section{Numerical method}
\label{sec:method}

\subsection{Finite element spaces}
We take a partition of $[0,1]$ by $N$ points $u_1 = 0 < u_2 < \ldots < u_{N} = 1$.
We call $\{ u_1, \ldots, u_N \}$ the mesh.
We will use a combination of piecewise linear and piecewise constant functions. We introduce the spaces:
\begin{align*}
  V_h & := \{ v_h \in C([0,1]) : v_h|_{[u_i,u_{i+1}]} \text{ is affine, for } i = 1, \ldots, N-1 \} \\
  Q_h & := \{ q_h \in L^2(0,1) : q_h|_{[u_i,u_{i+1}]} \text{ is constant, for } i = 1, \ldots, N-1 \}.
\end{align*}
We will denote by $V_{h,0}$ the space of finite element functions in $v_h \in V_h$ such that $v_h(0) = v_h(1) = 0$.
We will use the subscript $h$, defined to be the maximum mesh spacing, to denote discrete quantities. Temporal and spatial derivatives of discrete functions will be denoted by subscript $u$ and $t$ with a comma separating from the subscript $h$: e.g.\ $\eta_{h,u}$ denote the spatial derivative of $\eta_h$.

For a discrete parameterization $\vec{x}_h \in V_h^3$, we introduce two different tangent vector fields. First, $\vec{\tau}_h \in Q_h^3$ as the piecewise constant normalized derivative of $\vec{x}_h$:
\begin{equation}
  \label{eq:tauh-1}
  \vec{\tau}_h = \frac{\vec{x}_{h,u}}{ |\vec{x}_{h,u}| }.
\end{equation}
We will also require a piecewise linear approximation of $\vec{\tau}_h$ written $\tilde{\vec{\tau}}_h \in V_h^3$ with vertex values given by
\begin{align}
  \label{eq:tauh-2}
  \tilde{\vec{\tau}}_h( u_j, \cdot )
  & = \frac{ \vec{\tau}_{h}( u_i^-, \cdot) + \vec{\tau}_h( u_i^+, \cdot ) }{| \vec{\tau}_{h}( u_i^-, \cdot) + \vec{\tau}_h( u_i^+, \cdot ) |} \quad \mbox{ for } i = 1, \ldots, N,
\end{align}
where $\vec\tau_h( u_i^\pm, \cdot )$ is $\vec{\tau}_h$ evaluated on the left (or right) element to the vertex $u_i$.

We will apply mass lumping \cite{Tho84} using the notations:
\[
  ( f )_h := I_h( f ) \quad \mbox{ and } \quad \abs{ f }_{h} := \abs{ I_h( f ) } \mbox{ for } f \in C([0,1]),
\]
where $I_h$ is the Lagrangian interpolation operator $C([0,1]) \to V_h$.

Finally we denote by $V_{h,0}^3 + \vec{\kappa}_{b}(\cdot,t)$ the space of finite element functions $\vec{v}_h \in V_h^3$ which match the boundary conditions for $\vec{\kappa}_h$:
\[
  \vec{v}_{h} |_{u=0,1} = \vec{\kappa}_b := \alpha^0 \vec{e}^{1}_{h} + \beta^0 \vec{e}^{2}_{h},
\]
where $\vec{e}^1_h, \vec{e}^2_h \in V_h^3$ will denote components of the orthonormal frame that we will solve for as part of the method.
The space $V_{h,0}^3 + \vec{\kappa}_b$ will in general be time dependent.

\subsection{Semi-discrete problem}

We directly discretize the weak form \cref{eq:weak}.
The choice of piecewise linear or piecewise constant approximation spaces for the different functions is determined by the properties we will show in \cref{lem:stability}.

At this stage of discretization the choices are between which discrete function spaces each solution variable should live in and how to implement boundary conditions.
We choose piecewise linear approximations of position $\vec{x}$, bending moment $\vec{y}$, curvature $\vec{\kappa}$, angular momentum $m$ and frame $\vec{e}_{1}$ and $\vec{e}_2$ and piecewise constant approximations of twisting moment $z$, twist $\gamma$ and the Lagrange multiplier $p$.
We choose to enforce boundary conditions for the bending moment $\vec{y}$ and curvature $\vec{\kappa}$ in the function spaces but boundary conditions for the twisting moment $z$ and twist $\gamma$ arise as natural boundary conditions.
Imposing natural conditions means that the boundary conditions are only achieved exactly in the limit of small mesh spacing (numerical confirmation not shown).
We will see our choices naturally lead to the key properties of our scheme.
A summary of discretization choices in given in \cref{tab:variables}.

\begin{table}[tb]
  \centering
  \begin{tabular}[tbh]{ccc}
    \hline
    Variable & label & discrete space \\
    \hline
    Position & $\vec{x}_h$ & $V_h^3$ \\
    Lagrange multiplier & $p_h$ & $Q_h$ \\
    Vector curvature & $\vec{\kappa}_h$ & $V_{h,0}^3 + \kappa_b$ \\
    Twist & $\gamma_h$ & $Q_h$ \\
    Tangential angular velocity & $m_h$ & $V_h$ \\
    Normal moment & $\vec{y}_h$ & $V_{h,0}^3$ \\
    Tangential moment & $z_h$ & $Q_h$ \\
    Frame vectors ($j=1,2$) & $\vec{e}^{j}_{h}$ & $V_{h,0}^3$ \\
    \hline
  \end{tabular}
  \caption{Summary of discretization choices for terms in model. Recall the $V_h$ is the space of piecewise linear functions and $Q_h$ is the space of piecewise constant functions.}
  \label{tab:variables}
\end{table}

\begin{problem}
  Given preferred curvatures $\alpha^0, \beta^0$, a preferred twist $\gamma^0$,
  and initial conditions for the parameterization $\vec{x}_h^0$ and frame $\vec{e}_{h}^{1,0}, \vec{e}_{h}^{2,0}$, for $t \in [0,T)$,
find $\vec{x}_h( \cdot, t) \in V_h^3, \vec{y}_h( \cdot, t) \in V_{h,0}^3, \vec{\kappa}_h( \cdot, t ) \in V_{h,0}^3 + \vec{\kappa}_b(\cdot,t)$,
$m_h( \cdot, t ) \in V_h, z_h( \cdot, t ), \gamma_h( \cdot, t), p_h( \cdot, t ) \in Q_h$, $\vec{e}^{1}_{h}( \cdot, t ), \vec{e}^{2}_{h}( \cdot, t ) \in V_h^3$ such that, for all $t \in (0,T)$:
\begin{subequations}
\begin{align}
  \label{eq:fem-x}
  \int_0^1 \K \vec{x}_{h,t} \cdot \vec{\phi}_h | \vec{x}_{h,u} |
  - \int_0^1 p_h \vec\tau_h \cdot \vec{\phi}_{h,u}
  \qquad\qquad\qquad\qquad\qquad\qquad & \\
  \nonumber
  - \int_0^1 \bigl( ( \id - \vec\tau_h \otimes \vec\tau_h ) \frac{\vec{y}_{h,u}}{| \vec{x}_{h,u} |}   + z_h \vec\tau_h \times \vec{\kappa}_h \bigr) \cdot \vec{\phi}_{h,u}
  & = 0 \\
\label{eq:fem-y}
  \int_0^1 \Bigl( \bigl(
  \vec{y}_h
  - A ( \vec{\kappa}_h - \alpha^0 \vec{e}^1_h - \beta^0 \vec{e}^2_h )
  \qquad\qquad\qquad\qquad\qquad\qquad & \\
  \nonumber
  - B ( ( \id - \tilde{\vec\tau}_h \otimes \tilde{\vec\tau}_h ) \vec{\kappa}_{h,t} - m_h \tilde{\vec\tau}_h \times \vec{\kappa}_h )
  \bigr) \cdot \vec{\psi}_h \Bigr)_h \abs{ \vec{x}_{h,u} } & = 0 \\
  \label{eq:fem-w}
  \int_0^1 ( \vec{\kappa}_h \cdot \vec\psi_h )_h | \vec{x}_{h,u} | + \frac{\vec{x}_{h,u}}{ | \vec{x}_{h,u} | }  \cdot \vec\psi_{h,u} & = 0
\end{align}
for all $\vec\phi_h \in V_h^3$, $\vec\psi_h \in V_{h,0}^3$,
\begin{align}
  \label{eq:fem-gamma}
  \int_0^1 - ( K^\rot m_h v_h )_h | \vec{x}_{h,u} | - \int_0^1 z_h v_{h,u}
  + \int_0^1 ( \vec{y}_h \cdot ( \tilde{\vec\tau}_h \times \vec{\kappa}_h ) v_h )_h | \vec{x}_{h,u} |
   & = 0, \\
  \label{eq:fem-z}
  \int_0^1 ( z_h - C ( \gamma_h - \gamma^0 ) - D \gamma_{h,t} ) q_h | \vec{x}_{h,u} | & = 0, \\
  \label{eq:fem-m}
  \int_0^1 \gamma_{h,t} q_h | \vec{x}_{h,u} |
  - \int_0^1 m_{u,h} q_h
  + \int_0^1 \vec\tau_h \times \vec{\kappa}_h \cdot \vec{x}_{h,tu} q_h & = 0
\end{align}
for all $q_h \in Q_h$ and $v_h \in V_h$,
\begin{align}
  \label{eq:fem-p}
  \int_0^1 q_h \vec\tau_h \cdot \vec{x}_{h,tu} & = 0,
\end{align}
for all $q_h \in Q_h$,
and
\begin{equation}
  \label{eq:fem-frame}
  \int_0^1 \Bigl( \bigl( \vec{e}_{h,j,t} - \bigl( \tilde{\vec{\tau}}_h \times \tilde{\vec{\tau}}_{h,t} + m_h \tilde{\vec{\tau}}_h \bigr) \times \vec{e}_{h,j} \bigr) \cdot \vec{\phi}_h \Bigr)_h \abs{\vec{x}_{h,u}} = 0, \mbox{ for } j = 1,2,
\end{equation}
for all $\vec{\phi}_h\in V_h^3$,
subject to the initial conditions:
\begin{equation}
  \label{eq:fem-initial}
  \vec{x}_h(\cdot, 0) = \vec{x}_{h}^0, \qquad
  \vec{e}_{h}^{1}(\cdot, 0) = \vec{e}_{h}^{1,0}, \qquad
  \vec{e}_{h}^{2}(\cdot, 0) = \vec{e}_{h}^{1,0},
\end{equation}
and initial equations:
\begin{align}
  \int_0^1 \vec{\kappa_h}(\cdot, 0) \cdot \vec\psi_h | \vec{x}^0_{h,u} |
  + \frac{\vec{x}^0_{h,u}}{ | \vec{x}^0_{h,u} | } \cdot \vec\psi_{h,u}
  & = 0 && \mbox{ for all } \vec\psi_h \in V_{h,0}^3 \\
  \int_0^1 ( \gamma_h(\cdot, 0) - \frac{\vec{e}^{1,0}_{h,u}}{\abs{\vec{x}^0_{h,u}}} \cdot \vec{e}^{2,0}_h ) v_h \abs{\vec{x}^0_{h,u}}
  & = 0 && \mbox{ for all } v_h \in V_h.
\end{align}
\end{subequations}
\end{problem}

\begin{remark}
  Decoupling variables for curvature $\vec{w}_h$ and position $\vec{x}_h$ should be interpreted as a tool for solving the partial differential equation system.
  This is widely used when solving geometric partial differential equations (see e.g. \cite{DziKuwSch02} with the convergence results in \cite{DecDzi08} and the review \cite{DecDziEll05}).
  Since we compute with piecewise linear curves we cannot formulate an exact curvature of the discrete curve.
  This is a key difference between the approach presented here and super-helix and super-clothoid approaches \cite{Bertails2006,Casati2013}.
\end{remark}

Using $\vec{e}^{0}_{h} \equiv \tilde{\vec\tau}_h$, we will see $\{ \vec{e}^{0}_{h}, \vec{e}^{1}_{h}, \vec{e}^{2}_{h} \}$ is a vertex-wise orthonormal frame.
Indeed, we note that \cref{eq:fem-frame} implies we recover the vertex-wise relations
\begin{equation}
  \label{eq:fem-frame-vertex}
  \vec{e}^{j}_{h,t} = \vec\omega_h \times \vec{e}^{j}_{h}, \quad \mbox{ for } j=0,1,2,
\end{equation}
where
$\vec\omega_h = \tilde{\vec\tau}_h \times \tilde{\vec\tau}_{h,t} + m_h \tilde{\vec\tau}_h \in V_h^3$.
This implies the following vertex-wise identities hold:
\begin{equation}
  \label{eq:e-ttauh}
  \vec{e}^{1}_{h} \cdot \vec{e}^{2}_{h} = \vec{e}^{1}_{h} \cdot \tilde{\vec{\tau}}_h = \vec{e}^{2}_{h} \cdot \tilde{\vec{\tau}}_h = 0
  \qquad \mbox{ and } \qquad
  | \tilde{\vec{\tau}}_h | = | \vec{e}^{1}_{h} | = | \vec{e}^{2}_{h} | = 1,
\end{equation}
so long as the initial values satisfy corresponding versions of these identities. In other words we have the $(\tilde{\vec{\tau}}_h, \vec{e}^{1}_{h}, \vec{e}^{2}_{h})$ form an orthonormal frame at each vertex.

  Next, we note that we can write \cref{eq:fem-w} as:
  \[
    \vec{\kappa}_{h}( u_i, \cdot ) = \frac{ \vec{\tau}_h( u_i^+, \cdot ) - \vec{\tau}_h( u_i^-, \cdot ) }{ \frac{1}{2} ( \abs{ \vec{x}_{h,u}( u_i^+ ) } + \abs{ \vec{x}_{h,u}( u_i^-, \cdot ) } ) } \qquad \mbox{ for } i = 2, \ldots, N-1.
  \]
  So that, using the fact that $\abs{ \vec{\tau}_h } = 1$, we can infer that $\vec{\kappa}_h$ and $\tilde{\vec\tau}_h$ are orthogonal at the vertices. Indeed, for all $i= 2, \ldots, N-1$, we have
  \begin{align*}
    \vec{\kappa}_{h}( u_i, \cdot ) \cdot \tilde{\vec\tau}_h( u_i, \cdot )
    & = \frac{ \vec{\tau}_h( u_i^+, \cdot ) - \vec{\tau}_h( u_i^-, \cdot ) }{ \frac{1}{2} ( \abs{ \vec{x}_{h,u}( u_i^+ ) } + \abs{ \vec{x}_{h,u}( u_i^-, \cdot ) } ) } \cdot
    \frac{ \vec{\tau}_{h}( u_i^-, \cdot) + \vec{\tau}_h( u_i^+, \cdot ) }{| \vec{\tau}_{h}( u_i^-, \cdot) + \vec{\tau}_h( u_i^+, \cdot ) |} \\
    & = \frac{ \vec{\tau}_h( u_i^+, \cdot ) \cdot \vec{\tau}_h( u_i^+, \cdot ) - \vec{\tau}_h( u_i^-, \cdot) \cdot \vec{\tau}_h( u_i^-, \cdot ) }{ { \frac{1}{2} ( \abs{ \vec{x}_{h,u}( u_i^+ ) } + \abs{ \vec{x}_{h,u}( u_i^-, \cdot ) } ) }{| \vec{\tau}_{h}( u_i^-, \cdot) + \vec{\tau}_h( u_i^+, \cdot ) |} } \\
    & = 0.
  \end{align*}
  Further, at $i=1$ and $i=N$, the boundary conditions give us that $\vec{\kappa}_h$ and $\tilde{\vec\tau}_h$ are orthogonal directly.
  This implies we can create a decomposition of $\vec{\kappa}_h$, at the vertices, into fields $\alpha_h, \beta_h \in V_h$ given by
  \begin{equation}
    \label{eq:fem-w-decomp}
    \vec{\kappa}_h( u_i, \cdot ) = \alpha_h( u_i, \cdot ) \vec{e}^{1}_{h}( u_i, \cdot ) + \beta_h( u_i, \cdot ) \vec{e}^{2}_{h}( u_i, \cdot ) \qquad \mbox{ for } i = 1, \ldots, N.
  \end{equation}
  Similarly it can be shown that $\vec{y}_h( u_i, \cdot ) \cdot \tilde{\vec\tau}_h( u_i, \cdot ) = 0$ for $i = 1, \ldots, N$.

  \begin{lemma}
  \label{lem:stability}
  If $\alpha^0, \beta^0, \gamma^0$ are independent of time,
  any solution to the above problem satisfies:
  \begin{multline}
    \int_0^1 ( \K \vec{x}_{h,t} \cdot \vec{x}_{h,t} + K^\rot_h m_h^2 ) | \vec{x}_{h,u} | \\
    + \frac{1}{2} \ddt \int_0^1 \bigl( A \bigl( ( \alpha_h - \alpha^0 )^2 + ( \beta_h - \beta^0 )^2 \bigr)_h + C ( \gamma_h - \gamma^0 )^2 \bigr) | \vec{x}_{h,u} | \\
    + \int_0^1 \bigl( ( B ( \alpha_{h,t}^2 + \beta_{h,t}^2 ) )_h + D \gamma_{h,t}^2 \bigr) \abs{ \vec{x}_{h,u}} = 0.
  \end{multline}
\end{lemma}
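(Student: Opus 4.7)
The plan is the classical energy method: test each finite element equation with a carefully chosen function so that, upon summation, all indefinite couplings cancel and only drag, the time derivative of the elastic energy, and the viscous dissipation remain. A preliminary observation makes the time-differentiation below safe. Since $\vec{\tau}_h \cdot \vec{x}_{h,tu}$ is piecewise constant on each element, \eqref{eq:fem-p} tested against the characteristic function of each element gives $|\vec{x}_{h,u}|_t = 0$ elementwise, so time derivatives commute with the measure $|\vec{x}_{h,u}|\,du$. Testing \eqref{eq:fem-x} with $\vec{\phi}_h = \vec{x}_{h,t}$ and invoking \eqref{eq:fem-p} with $q_h = p_h$ to drop the pressure produces
\begin{equation*}
\int_0^1 \K \vec{x}_{h,t} \cdot \vec{x}_{h,t} |\vec{x}_{h,u}|
 = \int_0^1 (\id - \vec{\tau}_h \otimes \vec{\tau}_h) \frac{\vec{y}_{h,u}}{|\vec{x}_{h,u}|} \cdot \vec{x}_{h,tu}
 + \int_0^1 z_h\, \vec{\tau}_h \times \vec{\kappa}_h \cdot \vec{x}_{h,tu}.
\end{equation*}

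To dispose of the first right-hand integral, I would differentiate \eqref{eq:fem-w} in $t$ (using $|\vec{x}_{h,u}|_t = 0$ and $\partial_t(\vec{x}_{h,u}/|\vec{x}_{h,u}|) = (\id - \vec{\tau}_h\otimes\vec{\tau}_h) \vec{x}_{h,tu}/|\vec{x}_{h,u}|$) and then set the test function equal to $\vec{y}_h(t) \in V_{h,0}^3$, discarding the spurious $\vec{y}_{h,t}$ piece by re-using \eqref{eq:fem-w} itself with test $\vec{y}_{h,t}$. This identifies the integral with $-\int (\vec{\kappa}_{h,t} \cdot \vec{y}_h)_h |\vec{x}_{h,u}|$. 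For the second right-hand integral, testing \eqref{eq:fem-gamma} with $v_h = m_h$ and \eqref{eq:fem-m} with $q_h = z_h$ and subtracting kills the $\int z_h m_{h,u}$ coupling and identifies it with $-\int (K^\rot m_h^2)_h |\vec{x}_{h,u}| + \int (\vec{y}_h \cdot (\tilde{\vec{\tau}}_h \times \vec{\kappa}_h)\, m_h)_h |\vec{x}_{h,u}| - \int \gamma_{h,t} z_h |\vec{x}_{h,u}|$.

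The crux is the cancellation of the remaining $m_h$-weighted mixed term. At each interior vertex, $(\tilde{\vec{\tau}}_h, \vec{e}^1_h, \vec{e}^2_h)$ is an orthonormal frame by \eqref{eq:e-ttauh}, $\vec{\kappa}_h = \alpha_h \vec{e}^1_h + \beta_h \vec{e}^2_h$ by \eqref{eq:fem-w-decomp}, and since $\vec{y}_h \cdot \tilde{\vec{\tau}}_h = 0$ one similarly has $\vec{y}_h = y_h^1 \vec{e}^1_h + y_h^2 \vec{e}^2_h$. Using \eqref{eq:fem-frame-vertex} to compute the vertex time derivatives of the frame (so that $\vec{e}^1_{h,t} \cdot \vec{e}^2_h = m_h$ and $\vec{e}^2_{h,t} \cdot \vec{e}^1_h = -m_h$), a direct expansion gives
\begin{equation*}
\vec{\kappa}_{h,t} \cdot \vec{y}_h = \alpha_{h,t} y_h^1 + \beta_{h,t} y_h^2 + (\alpha_h y_h^2 - \beta_h y_h^1) m_h
\end{equation*}
at interior vertices, while $\vec{y}_h \cdot (\tilde{\vec{\tau}}_h \times \vec{\kappa}_h) = \alpha_h y_h^2 - \beta_h y_h^1$, so the two $m_h$-mixed terms cancel after mass lumping (boundary vertices contribute nothing since $\vec{y}_h \in V_{h,0}^3$ vanishes there). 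Testing \eqref{eq:fem-y} vertex-wise against $\vec{e}^1_h$ and $\vec{e}^2_h$, a completely analogous internal cancellation of the $B m_h$ contributions yields the clean constitutive identities $y_h^1 = A(\alpha_h - \alpha^0) + B \alpha_{h,t}$ and $y_h^2 = A(\beta_h - \beta^0) + B \beta_{h,t}$; testing \eqref{eq:fem-z} with $q_h = \gamma_{h,t}$ produces $\int z_h \gamma_{h,t} |\vec{x}_{h,u}| = \int C(\gamma_h - \gamma^0) \gamma_{h,t} |\vec{x}_{h,u}| + \int D \gamma_{h,t}^2 |\vec{x}_{h,u}|$.

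Substituting these constitutive relations and using that $\alpha^0, \beta^0, \gamma^0, A, C$ and $|\vec{x}_{h,u}|$ are all time-independent, every work term such as $A(\alpha_h - \alpha^0)\alpha_{h,t}$ converts under the integral to $\tfrac{1}{2}\partial_t [A(\alpha_h - \alpha^0)^2]$, producing the stated Lyapunov identity. The main obstacle is the three-way cancellation of the $m_h$-coupled terms across \eqref{eq:fem-x}, the time-differentiated \eqref{eq:fem-w}, and the pair \eqref{eq:fem-gamma}--\eqref{eq:fem-m}; this is exactly why the scheme must use the piecewise linear lifted tangent $\tilde{\vec{\tau}}_h$ in the viscous and torque terms together with mass lumping, since only then do the vertex-wise orthonormal-frame identities propagate cleanly through the bilinear forms.
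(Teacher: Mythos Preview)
Your argument is correct and follows the same energy method as the paper's own proof: the same choice of test functions in \eqref{eq:fem-x}, \eqref{eq:fem-p}, the time-differentiated \eqref{eq:fem-w}, \eqref{eq:fem-gamma}, \eqref{eq:fem-m} and \eqref{eq:fem-z}, and the same reliance on $|\vec{x}_{h,u}|_t=0$ and the vertex-wise frame orthonormality. The one organisational difference is in how \eqref{eq:fem-y} is processed: the paper stays vectorial, testing \eqref{eq:fem-y} with $\vec{\kappa}_{h,t}$ and then again with the nodal interpolant of $m_h\,\tilde{\vec{\tau}}_h\times\vec{\kappa}_h$ (set to zero at the endpoints) to close the $m_h$-cancellation, whereas you decompose $\vec{y}_h$ and $\vec{\kappa}_h$ into frame components $(y_h^1,y_h^2)$ and $(\alpha_h,\beta_h)$ at each vertex, derive the scalar constitutive identities $y_h^j=A(\cdot)+B(\cdot)_t$ directly, and read off the same cancellation in coordinates. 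The underlying algebra is identical; your coordinate version makes the cancellation slightly more transparent, while the paper's vectorial version makes more explicit why $\tilde{\vec{\tau}}_h$ (rather than $\vec{\tau}_h$) must appear in the lumped terms of \eqref{eq:fem-y} and \eqref{eq:fem-gamma}.
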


\begin{proof}
  First we see that \eqref{eq:fem-p} implies that $\abs{ \vec{x}_{h,u} }_t = 0$.

  We test \cref{eq:fem-x} with $\vec{x}_{h,t}$, \cref{eq:fem-gamma} with $-m_h$ and \cref{eq:fem-p} with $p_h$.
  Adding the resulting equations results in
  \begin{multline}
    \label{eq:stab-1}
    \int_0^1 ( \K \vec{x}_{h,t} \cdot \vec{x}_{h,t} + K^\rot m_h^2 ) \abs{ \vec{x}_{h,u} }
    - \int_0^1 \big( ( \id - \vec\tau_h \otimes \vec\tau_h ) \frac{\vec{y}_{h,u}}{ \abs{\vec{x}_{h,u}} } + z_h \vec{\tau}_h \times \vec{\kappa}_h \big) \cdot \vec{x}_{h,tu} \\
    + \int_0^1 z_h m_{h,u}
    - \int_0^1 \big( \vec{y}_{h} \cdot ( \tilde{\vec\tau}_h \times \vec{\kappa}_h ) m_h \big)_h \abs{ \vec{x}_{h,u} }
    = 0.
  \end{multline}

  Next, we take a time derivative of \cref{eq:fem-w} and test the result with $\vec{y}_h$.
  \[
    \int_0^1 ( \vec{\kappa}_{h,t} \cdot \vec{y}_h )_h \abs{ \vec{x}_{h,u} }
    + ( \id - \vec\tau_h \otimes \vec\tau_h ) \frac{ \vec{y}_{h,u} }{ \abs{ \vec{x}_{h,u} } } \cdot \vec{x}_{h,tu} = 0
  \]
  From the result we subtract \cref{eq:fem-y} tested with $\vec{\kappa}_{h,t}$ to see
  \begin{multline}
    \label{eq:stab-2}
    \int_0^1 ( \id - \vec\tau_h \otimes \vec\tau_h ) \frac{ \vec{y}_{h,u} }{ \abs{ \vec{x}_{h,u} } } \cdot \vec{x}_{h,tu}
    + \int_0^1 \Big(
    A ( \vec{\kappa}_h - \alpha^0 \vec{e}^{1}_{h} - \beta^0 \vec{e}^{2}_{h} ) \cdot \vec{\kappa}_{h,t} \Big)_h \abs{ \vec{x}_{h,u} } \\
    + \int_0^1 \Big( B \big(
    ( \id - \tilde{\vec\tau}_h \otimes \tilde{\vec\tau}_h ) \vec{\kappa}_{h,t} - m_h \tilde{\vec\tau}_h \times \vec{\kappa}_h
    \big) \cdot \vec{\kappa}_{h,t}
    \Big)_h \abs{ \vec{x}_{h,u} } = 0.
  \end{multline}
  Using \cref{eq:fem-w-decomp}, the frame equations \cref{eq:fem-frame-vertex} and some simple vector identities gives
  \begin{multline}
    \label{eq:stab-3a}
      \int_0^1 \Big(
      A ( \vec{\kappa}_h - \alpha^0 \vec{e}^{1}_{h} - \beta^0 \vec{e}^{2}_{h} ) \cdot \vec{\kappa}_{h,t} \Big)_h \\
      = \frac{1}{2} \frac{d}{dt} \int_0^1 \Bigl( A \bigl( ( \alpha_h - \alpha^0 )^2 + ( \beta_h - \beta^0 )^2 \bigr) \Bigr)_h | \vec{x}_{h,u} | \\
      + \int_0^1 \bigl( A ( \vec{\kappa}_{h} - \alpha^0 \vec{e}^{1}_{h} - \beta^0 \vec{e}^{2}_{h} ) \cdot \vec{\omega_h} \times \vec{\kappa}_h \bigr)_h \abs{ \vec{x}_{h,u} }.
  \end{multline}
  We can further reduce the right hand side using the definition of $\vec{\omega}_h$ and the fact that $\vec{\kappa}_h \cdot \tilde{\vec\tau}_h = 0$:
  \begin{multline}
    \label{eq:stab-3}
    \int_0^1 \Big( A ( \vec{\kappa}_h - \alpha^0 \vec{e}^{1}_{h} - \beta^0 \vec{e}^{2}_{h} ) \cdot \vec\omega_h \times \vec{\kappa}_h \Big)_h \abs{ \vec{x}_{h,u} } \\
    = \int_0^1 \Big( A ( \vec{\kappa}_h - \alpha^0 \vec{e}^{1}_{h} - \beta^0 \vec{e}^{2}_{h} ) \cdot ( m_h \tilde{\vec\tau}_h ) \times \vec{\kappa}_h\Big)_h \abs{ \vec{x}_{h,u} }.
  \end{multline}
  Similarly, we see that
  \begin{multline}
    \label{eq:stab-4}
    \int_0^1 \bigl( ( B ( \id - \tilde{\vec\tau} \otimes \tilde{\vec\tau} ) \vec{\kappa}_{h,t} - m_h \tilde{\vec\tau}_h ) \cdot \vec{\kappa}_{h,t} \bigr)_h \abs{ \vec{x}_{h,u} } \\
    = \int_0^1 \bigl( B ( \alpha_{h,t}^2 + \beta_{h,t}^2 ) \bigr)_h \abs{ \vec{x}_{h,u} }+ \int_0^1 \bigl( ( B ( \id - \tilde{\vec\tau} \otimes \tilde{\vec\tau} ) \vec{\kappa}_{h,t}- m_h \tilde{\vec\tau}_h ) \cdot ( m \tilde{\vec\tau}_h \times \vec{\kappa}_h ) \bigr)_h \abs{ \vec{x}_{h,u} }.
  \end{multline}
  Combining \cref{eq:stab-3a,eq:stab-3,eq:stab-4} with \cref{eq:stab-2} gives
  \begin{align*}
    & \frac{1}{2} \ddt \int_0^1 \Bigl( A \bigl( ( \alpha_h - \alpha^0 )^2 + ( \beta_h - \beta^0 )^2 \bigr) \Bigr)_h | \vec{x}_{h,u} |
    + \int_0^1 \bigl( B ( \alpha_{h,t}^2 + \beta_{h,t}^2 ) \bigr)_h \abs{ \vec{x}_{h,u} } \\
    & + \int_0^1 ( \id - \vec\tau_h \otimes \vec\tau_h ) \frac{ \vec{y}_{h,u} }{ \abs{ \vec{x}_{h,u} } } \cdot \vec{x}_{h,tu} \\
    & + \int_0^1 \bigl( A ( \vec{\kappa}_{h} - \alpha^0 \vec{e}^{1}_{h} - \beta^0 \vec{e}^{2}_{h} ) \cdot (m_h \tilde{\vec\tau}_h \times \vec{\kappa}_h) \bigr)_h \abs{ \vec{x}_{h,u} } \\
    & + \int_0^1 \bigl( ( B ( \id - \tilde{\vec\tau} \otimes \tilde{\vec\tau} ) \vec{\kappa}_{h,t} ) \cdot ( m \tilde{\vec\tau}_h \times \vec{\kappa}_h ) \bigr)_h \abs{ \vec{x}_{h,u} }
    = 0.
  \end{align*}
  We identify the last two terms of the right-hand side with terms in \cref{eq:fem-y} tested with the test function $\vec\psi_h$ given by
  \[
    \vec\psi_h( u_j ) = \begin{cases}
      0 & \mbox{ for } j = 0, N, \\
      m_h(u_j) \tilde{\vec\tau}_h(u_j) \times \vec{\kappa}_h(u_j) & \mbox{ for } 2 \le j \le N-1.
    \end{cases}
  \]
  Noting that $\vec{y}_h \cdot \vec\psi_h = \vec{y}_h \cdot ( m_h \tilde{\vec\tau}_h \times \vec{\kappa}_h )$, we infer that:
  \begin{multline}
    \label{eq:stab-5}
    \frac{1}{2} \ddt \int_0^1 \Bigl( A \bigl( ( \alpha_h - \alpha^0 )^2 + ( \beta_h - \beta^0 )^2 \bigr) \Bigr)_h | \vec{x}_{h,u} |
    + \int_0^1 \bigl( B ( \alpha_{h,t}^2 + \beta_{h,t}^2 ) \bigr)_h \abs{ \vec{x}_{h,u} } \\
    + \int_0^1 ( \id - \vec\tau_h \otimes \vec\tau_h ) \frac{ \vec{y}_{h,u} }{ \abs{ \vec{x}_{h,u} } } \cdot \vec{x}_{h,tu}
    + \int_0^1 \bigl( \vec{y}_h \cdot (m_h \tilde{\vec\tau}_h \times \vec{\kappa}_h) \bigr)_h \abs{ \vec{x}_{h,u} }
      = 0.
  \end{multline}

  We sum the result of testing \cref{eq:fem-z} with $-\gamma_{h,t}$ and \cref{eq:fem-m} with $z_h$ and rearrange:
  \begin{multline}
    \label{eq:stab-6}
    \frac{1}{2} \ddt \int_0^1 C \abs{ \gamma - \gamma^0 }^2 \abs{ \vec{x}_{h,u} }
    + \int_0^1 D \gamma_{h,t}^2 \abs{ \vec{x}_{h,u} } \\
    - \int_0^1 m_{h,u} z_h
    + \int_0^1 ( \vec\tau_h \times \vec{\kappa}_{h} ) \cdot \vec{x}_{h,tu} z_h = 0.
  \end{multline}

  Adding \cref{eq:stab-2,eq:stab-5,eq:stab-6} gives the desired result.
\end{proof}

\subsection{Fully discrete problem}

To discretize in time we use a uniform partition of the time interval $[0,T]$ into time steps $0 = t_0 < t_1 < \ldots < t_M = T$ where $t_n = n \Delta t$ for $n=0,\ldots,M$.
We denote discrete variables at a time step $t_n$ with a superscript $n$ and the frame vectors will be denoted by $\vec{e}^{1,n}_h$.
Our approach is a first order semi-implicit scheme which results in a linear problem to solve at each time step.
During the numerical experiments, we demonstrate that by choosing to take certain terms implicitly we recover the semi-discrete stability result.
For a variable $\eta$ defined at each time step $0 < n  < M$, we denote the backward difference $\bar\partial \eta^n := ( \eta^n - \eta^{n-1} ) / \Delta t$.
We define $\vec{\kappa}_{b}^n$ by
\[
  \vec{\kappa}_{b}^n := \alpha^0( \cdot, t^n ) \vec{e}^{1,n-1}_{h} + \beta^0( \cdot, t^n ) \vec{e}^{2,n-1}_{h}.
\]

As well as choosing whether to take terms implicitly or explicitly, we also integrate the constraint equation \cref{eq:fem-p} forwards in time and write the frame update equation as an algebraic relation which preserves the nature of the angular velocity vector $\vec{\omega}_h$.

\begin{problem}
  Given preferred curvatures $\alpha^0, \beta^0$, a preferred twist $\gamma^0$, and initial conditions for the parametrization $\vec{x}_h^0$ and the frame $\vec{e}^{1,0}_h, \vec{e}^{2,0}_h$,
  first define $\vec{\kappa}_{h}^0 \in V_{h,0}^3$ and $\gamma_h^0 \in V_h$ as the solutions of
  \begin{align*}
  \int_0^1 \vec{\kappa}_h^0 \cdot \vec\psi_h | \vec{x}^0_{h,u} |
  + \frac{\vec{x}^0_{h,u}}{ | \vec{x}^0_{h,u} | } \cdot \vec\psi_{h,u}
  & = 0 && \mbox{ for all } \vec\psi \in V_{h,0}^3 \\
  \int_0^1 ( \gamma_h^0 - \frac{\vec{e}^{1,0}_{h,u}}{\abs{\vec{x}^0_{h,u}}} \cdot \vec{e}^{2,0}_h ) v_h \abs{\vec{x}^0_{h,u}}
  & = 0 && \mbox{ for all } v_h \in V_h.
  \end{align*}
  Then for $n = 1, \ldots, M$
find $\vec{x}_h^n \in V_h^3, \vec{y}_h^n \in V_{h,0}^3, \vec{\kappa}_h^n \in V_{h,0}^3 + \vec{\kappa}_{b}^n$,
$m_h^n \in V_h, z_h^n, \gamma_h^n, p_h^n \in Q_h$, $\vec{e}^{1,n}_{h}, \vec{e}^{2,n}_{h} \in V_h^3$ such that
\begin{subequations}
  \label{eq:discrete}
\begin{align}
  \label{eq:discrete-x}
  \int_0^1 \K \bar\partial \vec{x}_{h}^n \cdot \vec{\phi}_h | \vec{x}_{h,u}^{n-1} |
  - \int_0^1 p_h^n \vec\tau_h \cdot \vec{\phi}_{h,u}
  \qquad\qquad\qquad\qquad\qquad\qquad \\
  \nonumber
  - \int_0^1 \bigl( ( \id - \vec\tau_h^{n-1} \otimes \vec\tau_h^{n-1} ) \frac{1}{| \vec{x}_{h,u}^{n-1} |} \vec{y}_{h,u}^n  + z_h^n \vec\tau_h^{n-1} \times \vec{\kappa}_h^{n-1} \bigr) \cdot \vec{\phi}_{h,u}
  & = 0 \\
\label{eq:discrete-y}
  \int_0^1 \bigl( \vec{y}_h^{n} - A ( \vec{\kappa}_h^n - \alpha^0( \cdot, t^n ) \vec{e}^{1,n-1}_h - \beta^0( \cdot, t^n ) \vec{e}^{2,n-1}_h )
  \qquad\qquad\qquad\qquad \\
  \nonumber
  - B \bigl( ( \id - \tilde{\vec\tau}_h^{n-1} \otimes \tilde{\vec\tau}_h^{n-1} ) \bar\partial \vec{\kappa}_{h}^n - m_h^{n-1} \tilde{\vec\tau}_h^{n-1} \vec{\kappa}_h^n \bigr)_h \bigr) \cdot \vec\psi_h | \vec{x}_{h,u}^{n-1} | & = 0 \\
  \label{eq:discrete-w}
  \int_0^1 \vec{\kappa}_h^{n} \cdot \vec\psi_h | \vec{x}_{h,u}^{n-1} | + \frac{1}{ | \vec{x}_{h,u}^{n-1} | } \vec{x}_{h,u}^n \cdot \vec\psi_{h,u} & = 0
\end{align}
for all $\vec\phi_h \in V_h^3$, $\vec\psi_h \in V_{h,0}^3$,
\begin{align}
  \label{eq:discrete-gamma}
  \int_0^1 - K^\rot m_h^n v_h | \vec{x}_{h,u}^{n-1} | - \int_0^1 z_h^n v_{h,u}
  + \int_0^1 \vec{y}_h^{n-1} \cdot ( \tilde{\vec\tau}_h^{n-1} \times \vec{\kappa}_h^{n-1} ) v_h | \vec{x}_{h,u}^{n-1} |
   & = 0, \\
  \label{eq:discrete-z}
  \int_0^1 ( z_h^n - C ( \gamma_h^n - \gamma^0( \cdot, t^n)  ) - D \bar\partial \gamma_{h}^n ) q_h | \vec{x}_{h,u}^{n-1} | & = 0, \\
  \label{eq:discrete-m}
  \int_0^1 \bar\partial \gamma_{h}^n q_h | \vec{x}_{h,u}^{n-1} |
  - \int_0^1 m_{u,h} q_h
  + \int_0^1 ( \vec\tau_h^{n-1} \times \vec{\kappa}_h^{n-1} ) \cdot \bar\partial \vec{x}_{h,u}^n q_h & = 0
\end{align}
for all $q_h \in Q_h$ and $v_h \in V_h$,
\begin{align}
  \label{eq:discrete-p}
  \int_0^1 q_h \vec\tau_h^{n-1} \cdot \vec{x}_{h,u}^n & = \int_0^1 | \vec{x}^0_{h,u} | q_h,
\end{align}
for all $q_h \in Q_h$.
Using the abbreviations:
\begin{align}
  \vec{k}_i^n
  & = \tilde{\vec\tau}_h^{n-1}( u_i ) \times \tilde{\vec\tau}_h^n( u_i ),
    &\vec{l}_i^n
  & = \tilde{\vec{\tau}}^n_h( u_i ),
  & \varphi_i^n & = \Delta t \, m_h^n( u_i ),
\end{align}
we apply the Rodrigues formula twice:
\begin{align}
  \label{eq:discrete-e1-R1}
  \tilde{\vec{e}}^{j,n}_{h}( u_i )
  & = \vec{e}^{j,n-1}_{h}( u_i ) ( \tilde{\vec\tau}_h^{n-1}( u_i ) \cdot \tilde{\vec\tau}_h^{n}( u_i ))
    + \vec{k}_i^n \times \vec{e}^{j,n-1}_{h}( u_i ) \\
  \nonumber
  & \qquad + \vec{e}^{j,n-1}_{h}( u_i ) \cdot \vec{k}_i^n
    \vec{k}_i^n
    \frac{1}{1 +  \tilde{\vec\tau}_h^{n-1}( u_i ) \cdot \tilde{\vec\tau}_h^{n}( u_i )} && j=1,2 \\
  \label{eq:discrete-e1-R2}
  {\vec{e}}_{j,h}^n( u_i )
  & = \tilde{\vec{e}}^{j,n}_{h}( u_i ) \cos( \varphi_i )
    + \vec{l}_i^n \times \tilde{\vec{e}}^{j,n}_{h}( u_i ) \sin( \varphi_i ) \\
  \nonumber
  & \qquad + ( \tilde{\vec{e}}^{j,n}_{h}( u_i ) \cdot \vec{l}_i^n )\vec{l}_i^n ( 1 - \cos( \varphi_i ) ) && j=1,2.
\end{align}
\end{subequations}
\end{problem}

The scheme results in a linear system of equations at each time step followed by an algebraic update formula for the frame.
The linear system can be solved using a direct sparse solver.
In this work, the numerical results are computed using the \textsf{UMFPACK} library \cite{umfpack}.

In addition to the usual time discretization, we have chosen to integrate the constraint equation forwards in time.
This gives us more control over the length element $|\vec{x}_{h,u}^k|$ as shown in the following lemma.

\begin{lemma}
  \label{lem:length}
  If there exists a solution such that $\abs{ \vec{\tau}_h^{n} - \vec{\tau}_h^{n-1} }^2 < 2$, then
  \begin{align}
    \abs{ \vec{x}_{h,u}^0 } \le \abs{ \vec{x}_{h,u}^{n} }
    = \frac{ \abs{ \vec{x}_{h,u}^0} }{ 1 - \frac{1}{2} \abs{ \vec{\tau}_h^{n} - \vec{\tau}_h^{n-1} }^2 }.
  \end{align}
\end{lemma}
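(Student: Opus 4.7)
The plan is to observe that the constraint equation \eqref{eq:discrete-p} is testable against piecewise constant functions, and both $\vec\tau_h^{n-1}$ and $\vec{x}_{h,u}^n$ are piecewise constant. Taking $q_h$ to be the indicator of a single element, the integral identity collapses to the pointwise (element-by-element) relation
\[
  \vec\tau_h^{n-1} \cdot \vec{x}_{h,u}^n = \abs{\vec{x}_{h,u}^0}.
\]
This is the key simplification that makes the rest of the argument essentially algebraic.

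Next I would use the definition \eqref{eq:tauh-1} of the piecewise constant tangent at step $n$, namely $\vec{x}_{h,u}^n = \abs{\vec{x}_{h,u}^n}\,\vec\tau_h^n$, to rewrite the identity as
\[
  \abs{\vec{x}_{h,u}^n}\bigl(\vec\tau_h^{n-1} \cdot \vec\tau_h^n\bigr) = \abs{\vec{x}_{h,u}^0}.
\]
Since both $\vec\tau_h^{n-1}$ and $\vec\tau_h^n$ are unit vectors on each element, the polarisation identity yields
\[
  \vec\tau_h^{n-1} \cdot \vec\tau_h^n = 1 - \tfrac{1}{2}\abs{\vec\tau_h^n - \vec\tau_h^{n-1}}^2.
\]
Under the hypothesis $\abs{\vec\tau_h^n - \vec\tau_h^{n-1}}^2 < 2$ this factor is strictly positive, so we may divide and obtain the stated equality
\[
  \abs{\vec{x}_{h,u}^n} = \frac{\abs{\vec{x}_{h,u}^0}}{1 - \tfrac{1}{2}\abs{\vec\tau_h^n - \vec\tau_h^{n-1}}^2}.
\]

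Finally, for the lower bound, Cauchy--Schwarz applied to the two unit vectors gives $\vec\tau_h^{n-1}\cdot\vec\tau_h^n \le 1$, so the denominator is at most $1$, yielding $\abs{\vec{x}_{h,u}^n} \ge \abs{\vec{x}_{h,u}^0}$. There is no real obstacle here; the only subtlety is recognising that the forward-in-time integration of the constraint (i.e.\ replacing $\vec{x}_{h,tu}$ by $\vec{x}_{h,u}^n$ on the left and $\abs{\vec{x}_{h,u}^0}$ on the right in \eqref{eq:discrete-p}) is exactly what turns the inextensibility constraint into this closed-form expression for the length element, rather than leaving one with only a time-differential statement that could allow drift.
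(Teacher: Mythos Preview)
Your proof is correct and follows essentially the same approach as the paper: both test \eqref{eq:discrete-p} with the characteristic function of a single element to obtain the element-wise identity $\vec\tau_h^{n-1}\cdot\vec{x}_{h,u}^n=\abs{\vec{x}_{h,u}^0}$, then use $\vec\tau_h^{n-1}\cdot\vec\tau_h^n=1-\tfrac12\abs{\vec\tau_h^n-\vec\tau_h^{n-1}}^2$ to conclude. The only cosmetic difference is the order in which the equality and the lower bound are deduced.
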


\begin{proof}
Testing equation \cref{eq:discrete-p} with $q_h = \chi_{[u_i, u_{i+1}]}$, the characteristic function of the interval $[u_i,u_{i-1}]$, gives the element-wise identity
\begin{align*}
  \vec{\tau}_h^{n-1} \cdot \vec{x}_{h,u}^{n} = \abs{ \vec{x}_{h,u}^0 }.
\end{align*}
Then we have
\begin{align*}
  \abs{ \vec{x}_{h,u}^n }
  = \abs{ \vec{x}_{h,u}^n } ( 1 - \vec{\tau}^{n-1}_h \cdot \vec\tau^n_h ) + \abs{ \vec{x}_{h,u}^0 }
  = \frac{1}{2} \abs{ \vec{x}_{h,u}^n } \abs{ \vec{\tau}_h^{n} - \vec{\tau}_{h}^{n-1} }^2 + \abs{ \vec{x}_{h,u}^0 }.
\end{align*}
Since $\frac{1}{2} \abs{ \vec{x}_{h,u}^n } \abs{ \vec{\tau}_h^{n} - \vec{\tau}_{h}^{n-1} }^2\ge 0$, we have $\abs{\vec{x}_{h,u}^n} \ge \abs{ \vec{x}_{h,u}^0 }$.
Furthermore if it holds that $\abs{ \vec{\tau}_h^{n} - \vec{\tau}_h^{n-1} }^2 < 2$, this equation can be rearranged to see the desired result.
\end{proof}

The first rotation \cref{eq:discrete-e1-R1} maps $\tilde{\vec{\tau}}_h^{n-1}$ to $\tilde{\vec\tau}_h^n$ and the second rotation, \cref{eq:discrete-e1-R2}, rotates the frame about the new $\tilde{\vec\tau}_h^n$ (leaving $\tilde{\vec\tau}_h^n$ unaffected). Since we apply the same rotations to the two frame vectors, and these rotations map $\tilde{\vec\tau}_h^{n-1}$ to $\tilde{\vec\tau}_h^n$, this update procedure results preserves the orthogonality of the frame vectors at each vertex.
In practical computations we will see an accumulation of floating point errors (see \cref{fig:model-3d-length-frame-mismatch}, for example). If the errors become too large we may renormalize the frame and continue the computation.

\section{Results}
\label{sec:results}

We provide three test cases for our numerical scheme. In the first we relax a straight rod to one with prescribed curvatures and twist and in the other two we demonstrate the applicability of the method to \textit{C. elegans} locomotion.

\subsection{Relaxation test}

We take as initial configuration of the rod a unit length straight midline curve and constant frame. We then simulate to $T=25$ with
\[
  \alpha^0 = 2 \sin( 3 \pi u / 2 ), \quad \beta^0 = 3 \cos( 3 \pi u / 2 ), \quad \gamma^0 = 5 \cos( 2 \pi u ).
\]
We take material parameters all equal to 1: $L = K_\rot = A = B = C = D = 1$, $\K = \id$. An example of the final configuration is shown in \cref{fig:relaxation-configuration}.
We will use this example to show how the stability result (\cref{lem:stability}) translates to the discrete case.
We also explore the errors in the length element (\cref{lem:length}) and failure to preserve exact orthogonality of the frame.
To show the properties of the scheme, we first simulate with $\Delta t = 1$ and $N=16$ and repeat with the time step $\Delta t$ reduced by a factor of four and doubling $N$: We simulate $\Delta t = 4^{-l}$ and $N=2^{4+l}$ for refinement levels $l = 0,1,\ldots,5$.

\begin{figure}[tbhp]
  \centering
  \includegraphics[width=0.5\textwidth]{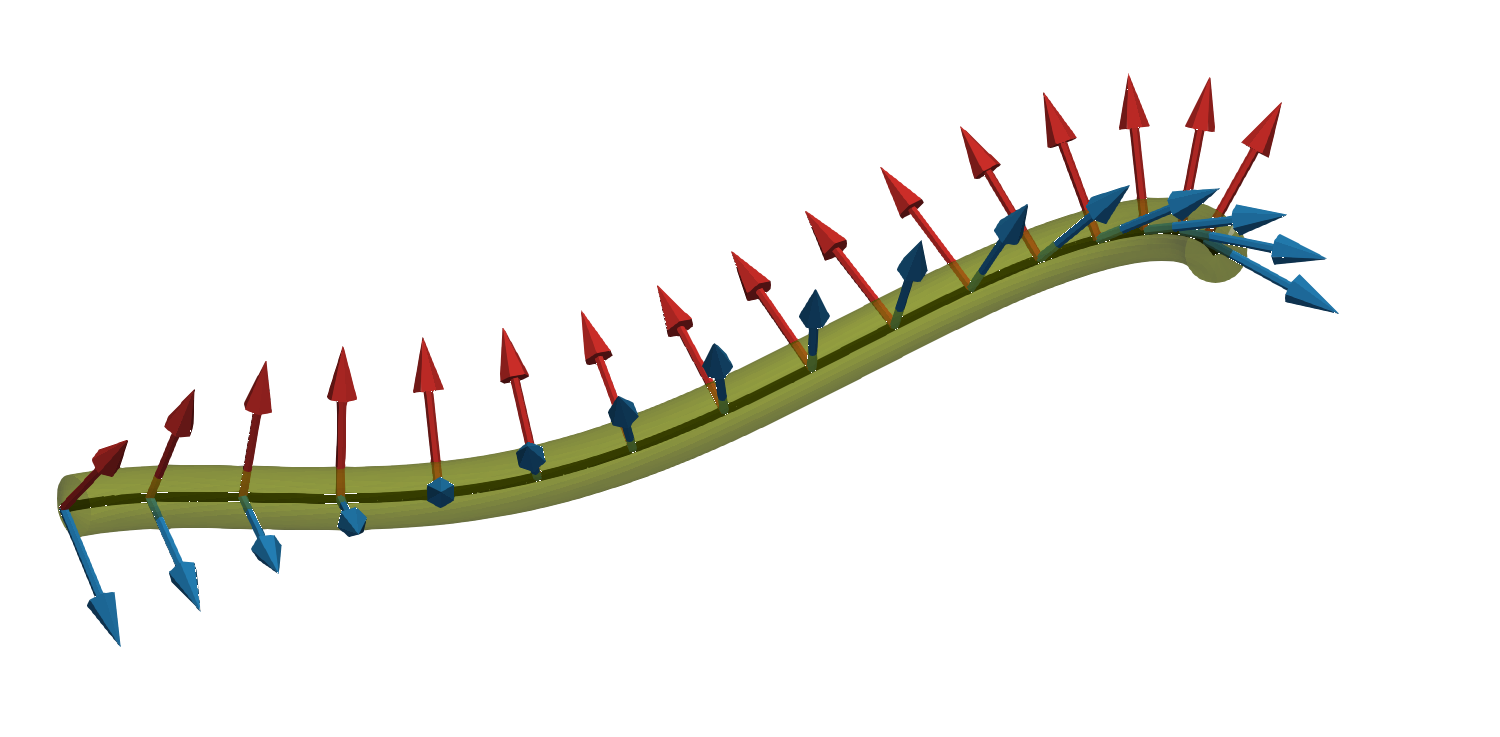}
  \caption{Configuration of the relaxation test with $\Delta t = 10^{-3}$ and $N=128$ at time $T=25$ showing the midline and a sample of frame vectors.
    The colouring is the same as \cref{fig:geometry} except that $\vec{e}^0_h$ is not shown.
  A video of this simulation is presented in \cref{sec:videos}.}
  \label{fig:relaxation-configuration}
\end{figure}

To investigate the fully discrete stability of the scheme, the elastic energy $\mathcal{E}(t^n)$ is shown at each time step $t^n$ in \cref{fig:relaxation-stability} .
We define $\mathcal{E}(t^n)$ by
\begin{align*}
  \mathcal{E}(t^n) := \int_0^1 \Bigl\{ \Bigl( A \abs{ \vec{\kappa}_h - \alpha^0 \vec{e}^{1,n}_h - \beta^0 \vec{e}^{2,n}_h }^2  \Bigr)_h + C( \gamma_h^n - \gamma^0 )^2 \Bigr\} \abs{ \vec{x}_{h,u}^n }
\end{align*}
We see that across all configurations the energy decreases across all our simulation results apart from the very coarsest time steps.

\begin{figure}[tbhp]
  \centering
  \includegraphics{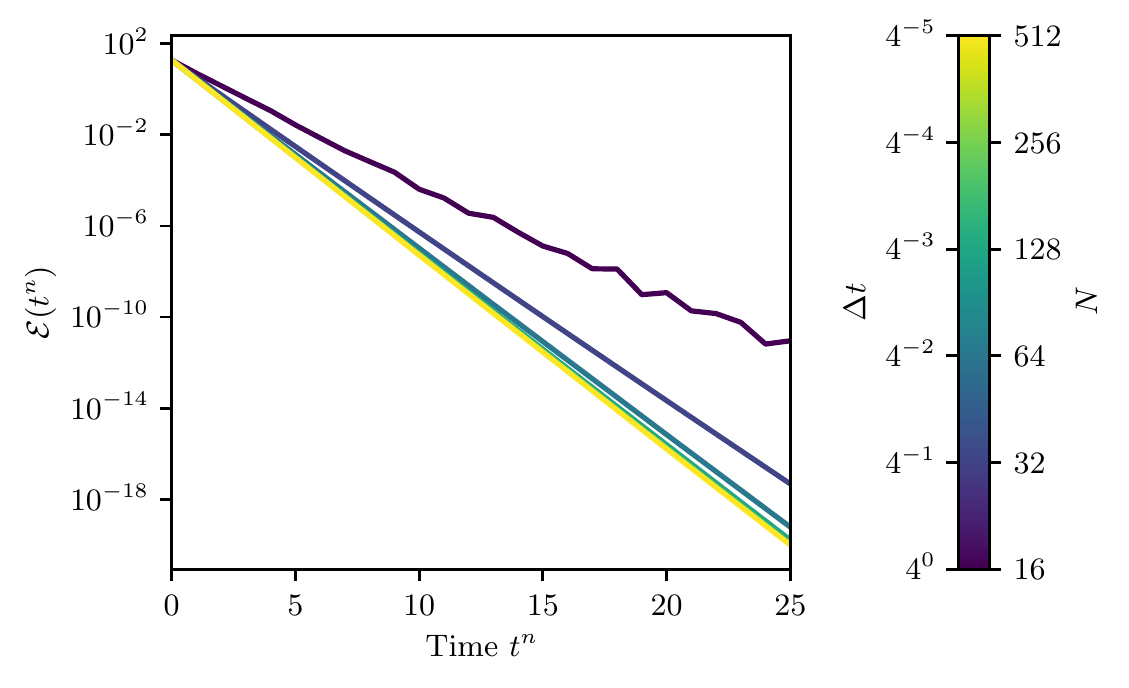}
  \caption{Elastic energy $\mathcal{E}(t^n)$ over time for varying discretization parameters for relaxation test.}
  \label{fig:relaxation-stability}
\end{figure}

Next, we look at the error in the length element.
We follow the refinement procedure detailed above and show the results in \cref{tab:relaxation-length} and \cref{fig:relaxation-length-frame-mismatch}.
The error shown is
\[
  \mathcal{F}_1(t^n) := \abs{ \int_0^1 \abs{ \vec{x}_{h,u}^n } - L }.
\]
The experimental order of convergence, $eoc(\Delta t)$, is
\[
  eoc( \Delta t ) := \log\Bigl( \max_n \mathcal{F}_1(t^n)_{l} / \max_n \mathcal{F}_1(t^n)_{l-1} \Bigr)
  / \log\Bigr( \Delta t_l / \Delta t_{l-1} \Bigr).
\]
We observe that the error in length element decrease in time after the increase from the first initially perfect time step. This matches with the analysis of \cref{lem:length} that the error only depends on the change in tangent from one time step to the next. Since the scheme converges to a stable solution the change in tangent vector reduces in time which results in the reduction of error.
Moreover, we see that the error reduces to second order in the time step which is an order higher than the expected error in the scheme overall.

\begin{figure}[tbhp]
  \centering
  \includegraphics{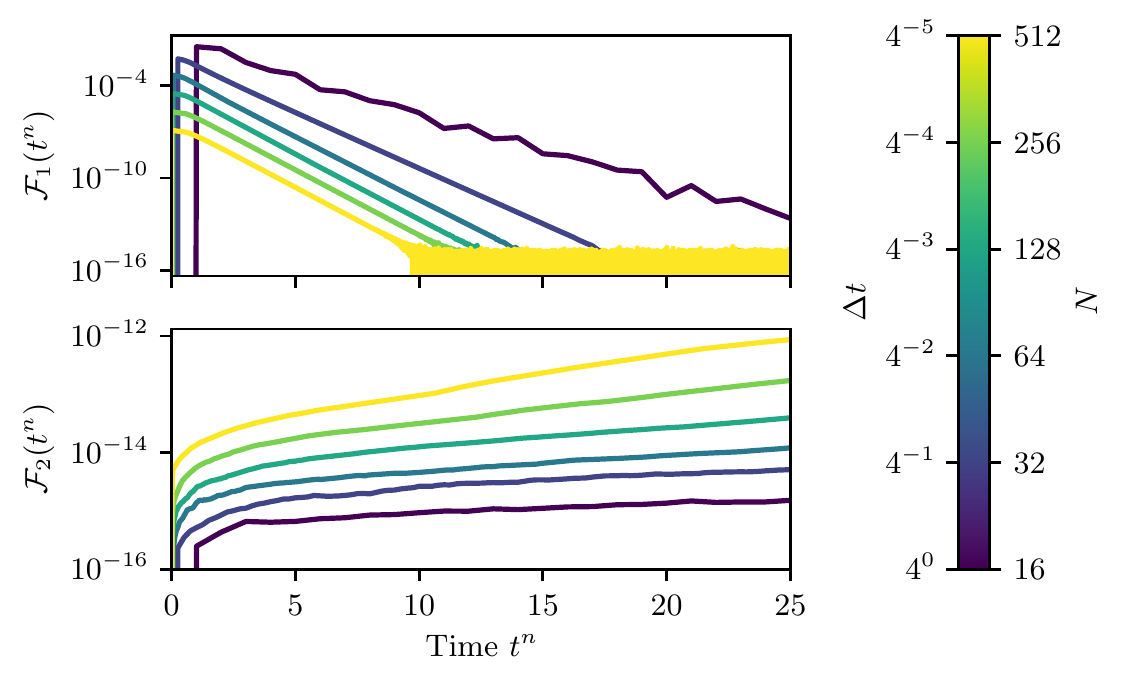}
  \caption{Error of local length constraint $\mathcal{F}_1(t^n)$ and frame orthogonality constraint $\mathcal{F}_2(t^n)$ over time for varying discretization parameters for relaxation test.}
  \label{fig:relaxation-length-frame-mismatch}
\end{figure}

\begin{table}[tbhp]
{\footnotesize
  \centering

  \caption{Maximum error of local length constraint $\mathcal{F}_1(t^n)$ for varying discretization parameters for relaxation test.}
  \label{tab:relaxation-length}

  \clearpage
\begin{tabular}{c|c|c|c}
$\Delta t$ & $N$ & $\max_n \mathcal{F}_1(t^n)$ & $eoc(\Delta t)$ \\
\hline
$1.00000$ & 16 & $3.46788 \cdot 10^{-2}$ & -- \\
$2.50000 \cdot 10^{-1}$ & 32 & $5.64486 \cdot 10^{-3}$ & $1.30952$ \\
$6.25000 \cdot 10^{-2}$ & 64 & $4.89655 \cdot 10^{-4}$ & $1.76355$ \\
$1.56250 \cdot 10^{-2}$ & 128 & $3.34948 \cdot 10^{-5}$ & $1.93488$ \\
$3.90625 \cdot 10^{-3}$ & 256 & $2.14247 \cdot 10^{-6}$ & $1.98330$ \\
$9.76562 \cdot 10^{-4}$ & 512 & $1.34687 \cdot 10^{-7}$ & $1.99580$ \\
\end{tabular}

}
\end{table}

Next, we test for errors in the frame orthogonality conditions with results shown in \cref{tab:relaxation-frame-mismatch} and \cref{fig:relaxation-length-frame-mismatch}.
Here, we look at the maximum over time of the $L^2$-norm of the errors in the orthogonality conditions:
\[
  \mathcal{F}_2(t^n) := \left( \sum_{0 \le j_1 \le j_2 \le 2} \int_0^1 \abs{ \vec{e}^{j_1,n}_h \cdot \vec{e}^{j_2,n}_h - \delta_{j_1, j_2} }^2 \abs{ \vec{x}_{h,u}^n } \right)^{{1}/{2}}.
\]
We observe that the errors are very small across all simulations although the error does increase as we refine in space and time.
Furthermore we see that the errors increase over time which we attribute to accumulation of rounding errors.
The final column in \cref{tab:relaxation-frame-mismatch} shows the maximum change in this error over time. This value is close to machine precision epsilon which indicates that the increase in errors, both in time and as we refine the time step, is due to an increase in the number of time steps.

\begin{table}[tbhp]
{\footnotesize
  \centering

  \caption{Maximum error of frame orthogonality constraint $\mathcal{F}_2(t^n)$ for varying discretization parameters}
  \label{tab:relaxation-frame-mismatch}

  \clearpage
\begin{tabular}{c|c|c|c}
$\Delta t$ & $N$ & $\max_n \mathcal{F}_2(t^n)$ & $\max_n (\mathcal{F}_2(t^n) - \mathcal{F}_2(t^{n-1}))$ \\
\hline
$1.00000$ & 16 & $1.51801 \cdot 10^{-15}$ & $2.46718 \cdot 10^{-16}$ \\
$2.50000 \cdot 10^{-1}$ & 32 & $5.09235 \cdot 10^{-15}$ & $2.31888 \cdot 10^{-16}$ \\
$6.25000 \cdot 10^{-2}$ & 64 & $1.20420 \cdot 10^{-14}$ & $2.11755 \cdot 10^{-16}$ \\
$1.56250 \cdot 10^{-2}$ & 128 & $3.95711 \cdot 10^{-14}$ & $2.26700 \cdot 10^{-16}$ \\
$3.90625 \cdot 10^{-3}$ & 256 & $1.72853 \cdot 10^{-13}$ & $2.35206 \cdot 10^{-16}$ \\
$9.76562 \cdot 10^{-4}$ & 512 & $8.70498 \cdot 10^{-13}$ & $2.02678 \cdot 10^{-16}$ \\
\end{tabular}

}
\end{table}

\subsection{Application to nematode locomotion in two and three spatial dimensions}
\label{sec:application}

We augment the method detailed above by changing the linear drag term to a resistive force term \cite{Keller1976}:
\[
  \K = ( \vec\tau \otimes \vec\tau ) + K ( \id - \vec\tau \otimes \vec\tau ),
\]
which we approximate in the fully discrete scheme by
\[
  \K = ( \vec\tau_h^{n-1} \otimes \vec{\tau}_h^{n-1} ) + K ( \id - \vec\tau_h^{n-1} \otimes \vec\tau_h^{n-1} ).
\]

We demonstrate that we can use the method to simulate \textit{C. elegans} locomotion in two and three dimensions.
We set $L=1$ and restrict our considerations to a stiff environment with $K=40, K_\rot=1$ which should correspond to a crawling behaviour.
We model the \textit{C. elegans} body as an elastic tapered cylinder. We assume that the internal viscous forces do not play a role in the stiff environment \cite{BerBoyTas09} so we consider ($B = D = 0$). For material parameters, we take $A = C = 8 ( ( \varepsilon + u ) ( \varepsilon + 1 - u ) )^{3/2} / ( 1 + 2 \varepsilon )^3$, for $\varepsilon > 0$ small, which corresponds to a uniform elasticity across the shell of a tapered body shape.
We assume that the frame directions correspond to physically meaningful directions within the worm. We assume that $\vec{e}^0$ follows the midline of the body pointing head to tail, $\vec{e}^1$ points in the ventral-dorsal plane - the usual bending direction when considering two dimensional locomotion - and that $\vec{e}^2$ points in the left-right direction.
Muscle contractions generate bending in either the $\vec{e}^1$ or $\vec{e}^2$ directions.

In the usual two dimensional scenario, \textit{C. elegans} generates bending waves in the dorsal-ventral plane. This will be our first test case:
\begin{align*}
  \alpha^0( u, t ) & = ( 10 u + 8 ( 1-u ) ) \sin\left( {2 \pi u}/{0.65} - 0.6 \pi t \right), \\
  \beta^0( u, t ) & = 0, \\
  \gamma^0( u, t ) & = 0.
\end{align*}
We have previously seen \cite{CohRan17-pp} that the first condition can be seen to recreate realistic looking \textit{C. elegans} locomotion postures.
We explore here how well our updated numerical method captures this behaviour. Further results for this test case are shown in \cref{sec:2d-problem}.

It is assumed that \textit{C. elegans} generates undulations in the dorsal-ventral plane due to symmetries in its neural control. However these symmetries do not exist in the head and neck regions (see the discussion in \cite{Bilbao2018}).
Therefore, we propose an alternative control strategy which results in three dimensional, non-planar body configurations. Using the notation $\chi_{[0,1/3]}$ for the characteristic function of the interval $[0,1/3]$, we simulate with
\begin{align*}
  \alpha^0( u, t ) & = ( 10 u + 8 ( 1-u ) ) \sin\left( {2 \pi u}/{0.65} - 0.6 \pi t \right), \\
  \beta^0( u, t ) & = 6 \chi_{[0,1/3]}, \\
  \gamma^0( u, t ) & = 0.
\end{align*}

For both cases, as initial condition we start with an initially straight rod with constant frame and simulate with $\alpha^0( u, 0 ), \beta^0( u, 0 ), \gamma^0( u, 0)$ until $t = 5$ and use the resulting curve as initial condition for our simulation. This means we have an initial condition where the curvatures and twist match the initial preferred curvatures and twist exactly however the frame orthogonality conditions will not hold exactly.

We show some characteristic body positions in \cref{fig:worm-configurations} and head trajectories in \cref{fig:worm-trajectories}. We note that simulations for the two dimensional case we have that the third component is zero and the twist is exactly zero (\cref{fig:worm-2d-kymograms}) whereas there is some twist in the three dimensional scenario even though the preferred twist is zero (\cref{fig:worm-3d-kymograms}).
Further the three dimensional test case demonstrates a non-planer body position and trajectory (see \cref{fig:worm-trajectories,sec:videos}).
The second test case demonstrates a possible strategy for generating three dimensional postures.
We check the errors in the length element and frame orthogonality conditions with results shown in \cref{fig:model-2d-length-frame-mismatch,tab:model-2d-length,tab:model-2d-frame-mismatch} for the two dimensional case and \cref{fig:model-3d-length-frame-mismatch,tab:model-3d-length,tab:model-3d-frame-mismatch} for the three dimensional case.
We observe similar results to the relaxation case.
We see the same second order convergence in the error in the length element although now this error increases and decreases periodically depending on the periodic undulations.
The error is higher overall since the midline continues to move throughout the simulation.
We see that the frame mismatch is again very small across all simulations. This error is initially higher since the initial conditions are derived from simulations.

\begin{figure}[tbhp]
  \centering

  \begin{subfigure}[t]{\textwidth}
    \centering
    \begin{minipage}[t]{0.49\textwidth}
      \centering
      (i)\\
      \includegraphics[width=\textwidth]{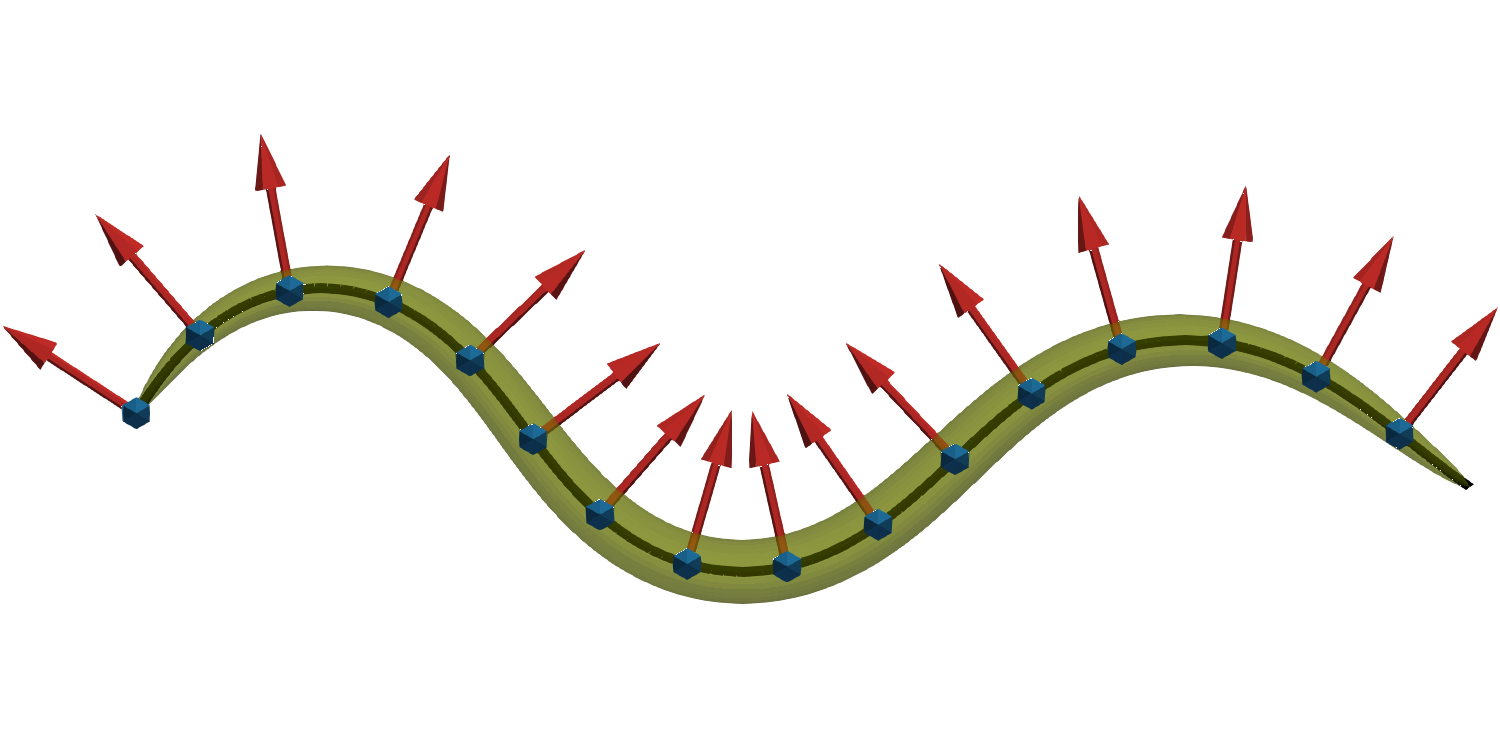}
    \end{minipage}
    \begin{minipage}[t]{0.49\textwidth}
      \centering
      (ii)\\
      \includegraphics[width=\textwidth]{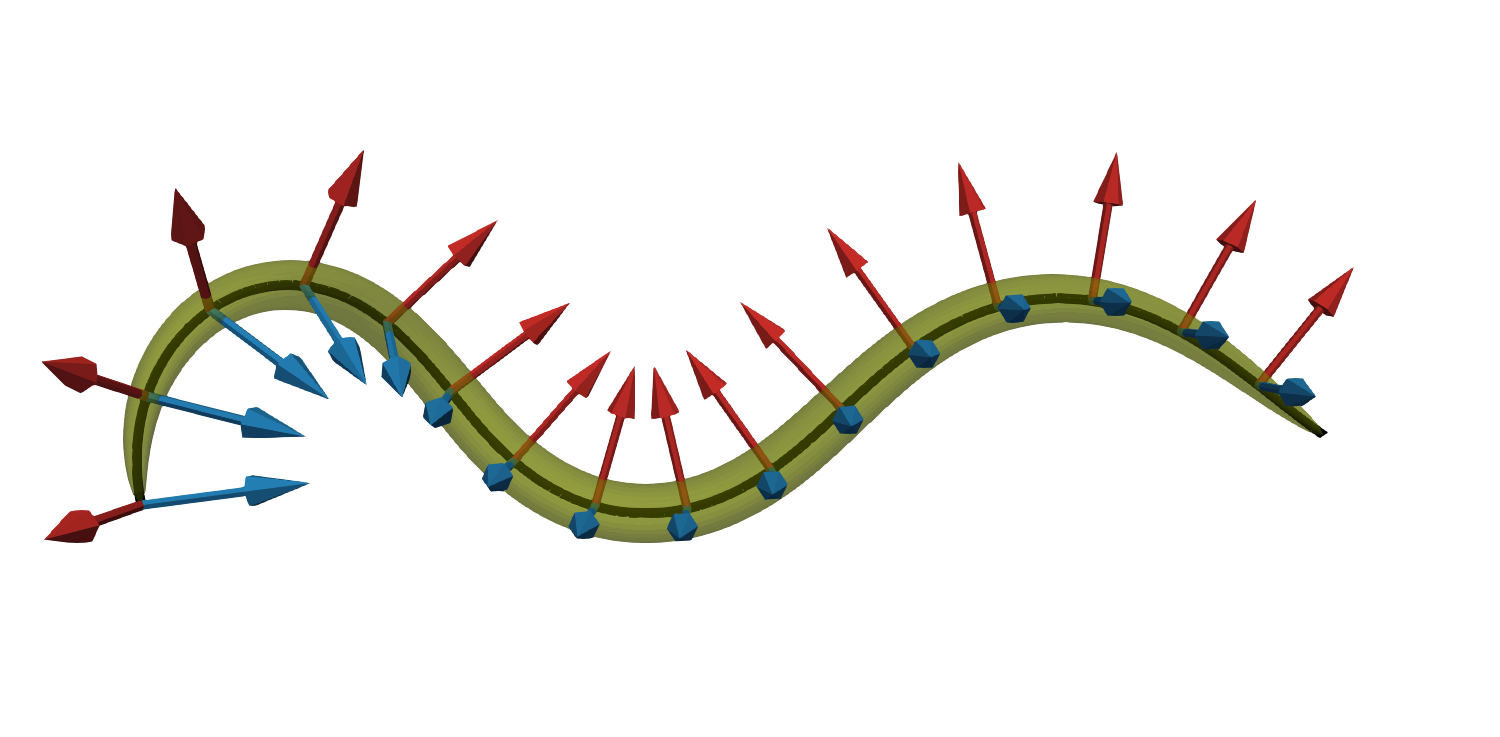}
    \end{minipage}
    \caption{Configurations of (i) two dimensional and (ii) three dimensional \textit{C. elegans} locomotion at time $T=25$. The body is oriented so that $u=0$ is to the left. In this image are shown the midline (black) and frame vectors $\vec{e}^{1,n}_{h}$ (red) $\vec{e}^{2,n}_{h}$ (blue) and the shaded (green) region is the three dimensional volume we are considering as in \cref{fig:geometry}.}
    \label{fig:worm-configurations}
  \end{subfigure}
  \begin{subfigure}[t]{\textwidth}
    \centering
    \includegraphics[width=0.5\textwidth]{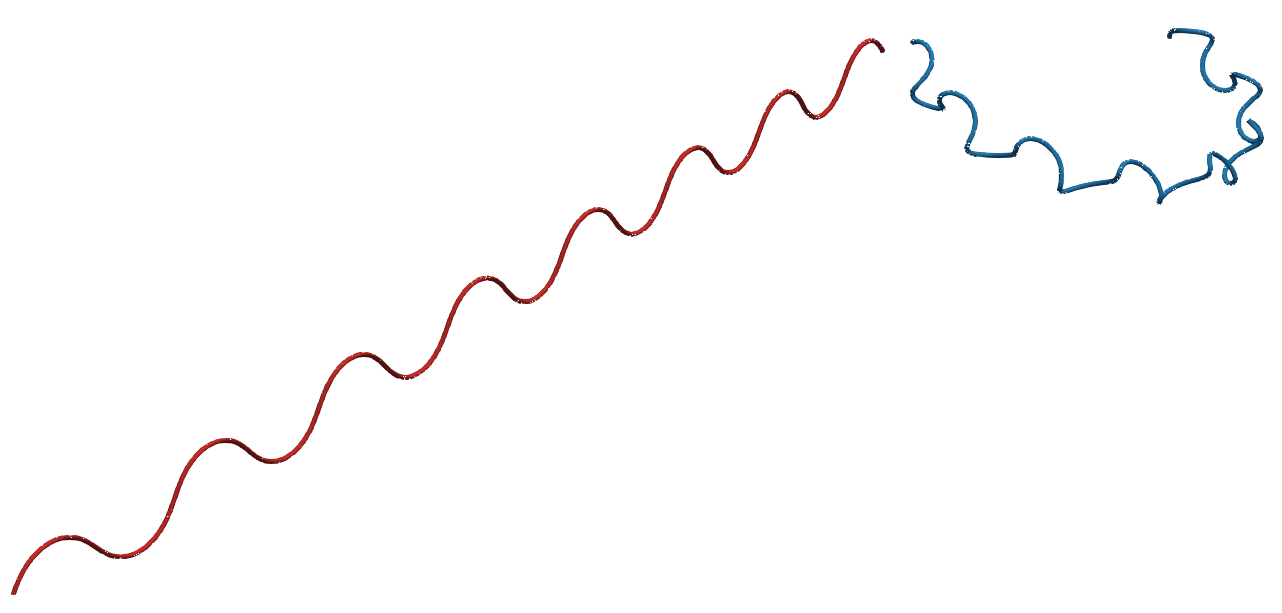}
    \caption{Trajectory of head point $(u=0)$ over time. Red shows the (planar) two dimensional case, blue shows the (non-planar) three dimensional test case.}
    \label{fig:worm-trajectories}
  \end{subfigure}
  \begin{subfigure}[c]{\textwidth}
    \centering
    \includegraphics[width=0.9\textwidth]{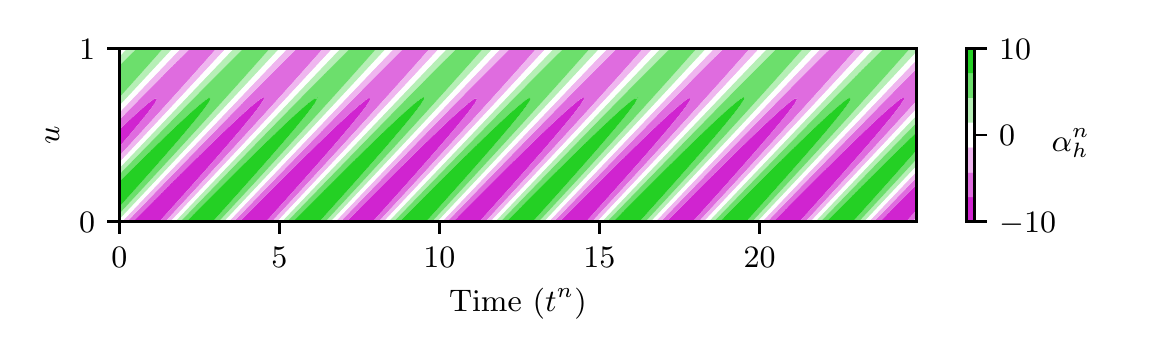}
    \caption{Iso-colour plots of the $\alpha_h^n$ component of generalized curvature for the two dimensional scenario. The other two components $\beta_h^n$ and $\gamma_h^n$ are zero to floating point accuracy.}
    \label{fig:worm-2d-kymograms}
  \end{subfigure} 
  \begin{subfigure}[c]{\textwidth}
    \centering
    \includegraphics[width=0.9\textwidth]{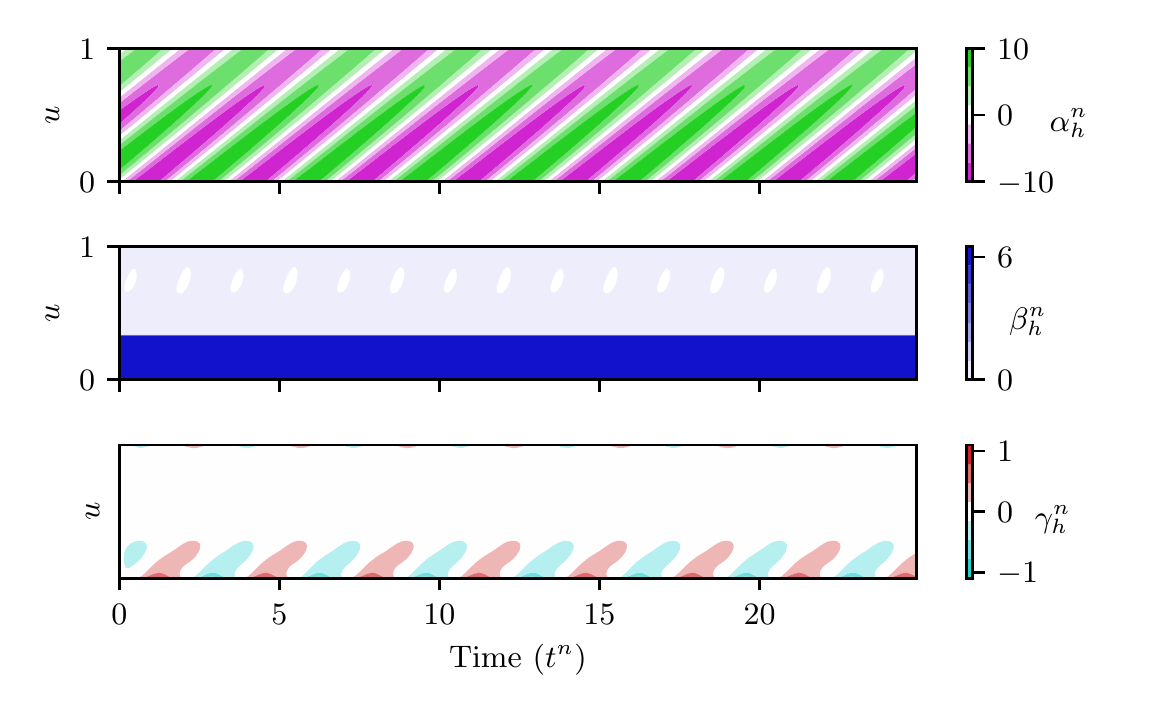}
    \caption{Iso-colour plots of the components of generalized curvature for the three dimensional scenario.}
    \label{fig:worm-3d-kymograms}
  \end{subfigure} 

  \caption{Simulations of \textit{C. elegans} locomotion in two and three dimensions.}
\end{figure}

\begin{figure}[tbhp]
  \centering
  \includegraphics{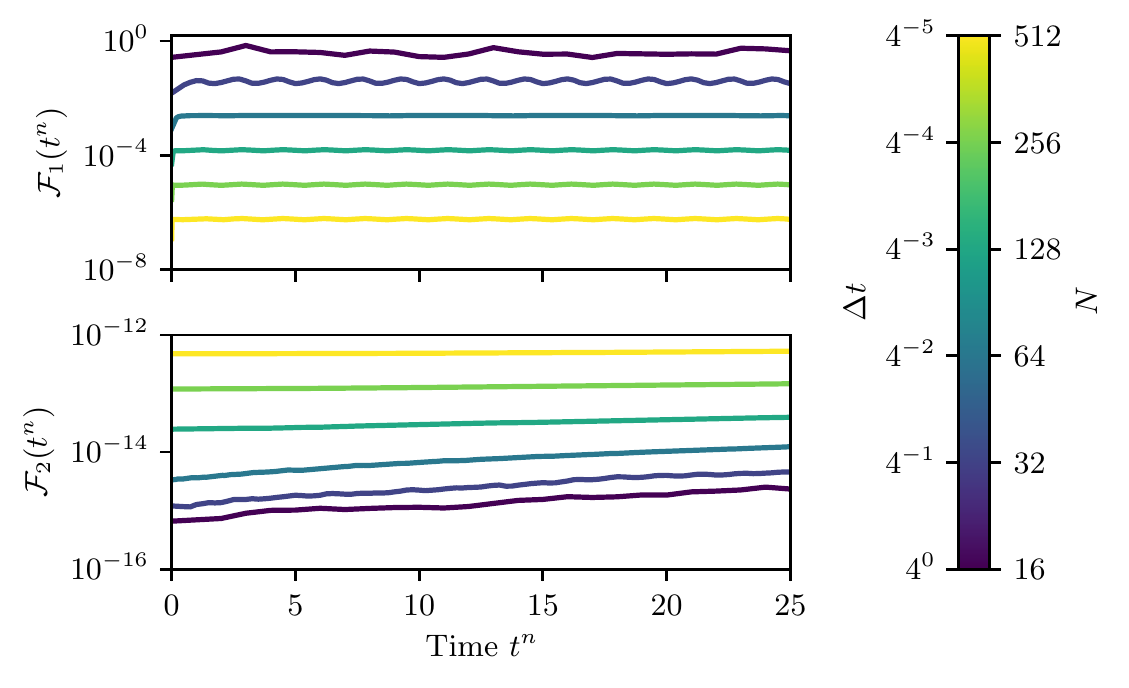}
  \caption{Error of local length constraint $\mathcal{F}_1(t^n)$ and frame orthogonality constraint $\mathcal{F}_2(t^n)$ over time for varying discretization parameters for two dimensional test case.}
  \label{fig:model-2d-length-frame-mismatch}
\end{figure}

\begin{table}[tbhp]
{\footnotesize
  \caption{Results for two dimensional test case.}

  \begin{subfigure}[c]{\textwidth}
    \centering
    \caption{Maximum error of local length constraint $\mathcal{F}_1(t^n)$ for varying discretization parameters.}
    \label{tab:model-2d-length}

    \clearpage
\begin{tabular}{c|c|c|c}
$\Delta t$ & $N$ & $\max_n \mathcal{F}_1(t^n)$ & $eoc(\Delta t)$ \\
\hline
$1.00000$ & 16 & $6.93355 \cdot 10^{-1}$ & -- \\
$2.50000 \cdot 10^{-1}$ & 32 & $4.71797 \cdot 10^{-2}$ & $1.93868$ \\
$6.25000 \cdot 10^{-2}$ & 64 & $2.47289 \cdot 10^{-3}$ & $2.12695$ \\
$1.56250 \cdot 10^{-2}$ & 128 & $1.58417 \cdot 10^{-4}$ & $1.98220$ \\
$3.90625 \cdot 10^{-3}$ & 256 & $9.95399 \cdot 10^{-6}$ & $1.99616$ \\
$9.76562 \cdot 10^{-4}$ & 512 & $6.23034 \cdot 10^{-7}$ & $1.99895$ \\
\end{tabular}

  \end{subfigure}

  \begin{subfigure}[c]{\textwidth}
    \centering
    \caption{Maximum error of frame orthogonality constraint $\mathcal{F}_2(t^n)$ for varying discretization parameters.}
    \label{tab:model-2d-frame-mismatch}

    \clearpage
\begin{tabular}{c|c|c|c}
$\Delta t$ & $N$ & $\max_n \mathcal{F}_2(t^n)$ & $\max_n (\mathcal{F}_2(t^n) - \mathcal{F}_2(t^{n-1}))$ \\
\hline
$1.00000$ & 16 & $2.51950 \cdot 10^{-15}$ & $2.65456 \cdot 10^{-16}$ \\
$2.50000 \cdot 10^{-1}$ & 32 & $4.56873 \cdot 10^{-15}$ & $1.69294 \cdot 10^{-16}$ \\
$6.25000 \cdot 10^{-2}$ & 64 & $1.23614 \cdot 10^{-14}$ & $8.12610 \cdot 10^{-17}$ \\
$1.56250 \cdot 10^{-2}$ & 128 & $3.93250 \cdot 10^{-14}$ & $6.66189 \cdot 10^{-17}$ \\
$3.90625 \cdot 10^{-3}$ & 256 & $1.47282 \cdot 10^{-13}$ & $5.35961 \cdot 10^{-17}$ \\
$9.76562 \cdot 10^{-4}$ & 512 & $5.29361 \cdot 10^{-13}$ & $3.92328 \cdot 10^{-17}$ \\
\end{tabular}

  \end{subfigure}
}
\end{table}

\begin{figure}[tbhp]
  \centering
  \includegraphics{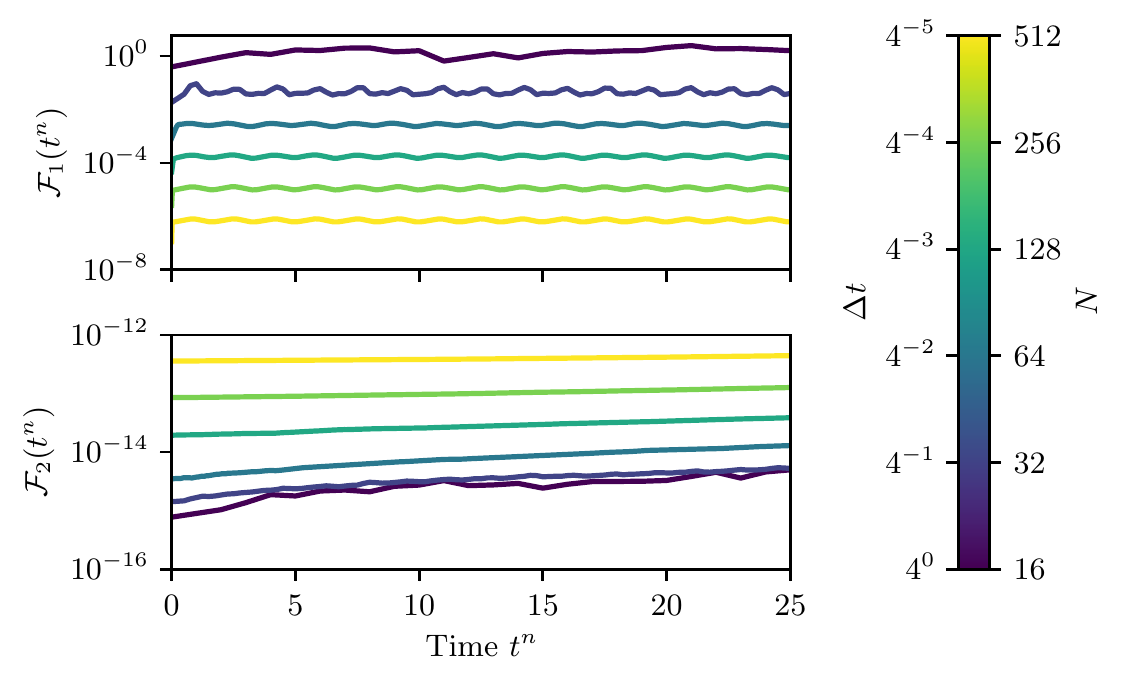}
  \caption{Error of local length constraint $\mathcal{F}_1(t^n)$ and frame orthogonality constraint $\mathcal{F}_2(t^n)$ over time for varying discretization parameters for three dimensional test case.}
  \label{fig:model-3d-length-frame-mismatch}
\end{figure}

\begin{table}[tbhp]
{\footnotesize
  \caption{Results for three dimensional test case.}

  \begin{subfigure}[c]{\textwidth}
    \centering
    \caption{Maximum error of local length constraint $\mathcal{F}_1(t^n)$ for varying discretization parameters.}
    \label{tab:model-3d-length}

    \clearpage
\begin{tabular}{c|c|c|c}
$\Delta t$ & $N$ & $\max_n \mathcal{F}_1(t^n)$ & $eoc(\Delta t)$ \\
\hline
$1.00000$ & 16 & $2.43271$ & -- \\
$2.50000 \cdot 10^{-1}$ & 32 & $9.14325 \cdot 10^{-2}$ & $2.36686$ \\
$6.25000 \cdot 10^{-2}$ & 64 & $3.05806 \cdot 10^{-3}$ & $2.45101$ \\
$1.56250 \cdot 10^{-2}$ & 128 & $1.98811 \cdot 10^{-4}$ & $1.97157$ \\
$3.90625 \cdot 10^{-3}$ & 256 & $1.29005 \cdot 10^{-5}$ & $1.97295$ \\
$9.76562 \cdot 10^{-4}$ & 512 & $8.11140 \cdot 10^{-7}$ & $1.99566$ \\
\end{tabular}

  \end{subfigure}

  \begin{subfigure}[c]{\textwidth}
    \centering
    \caption{Maximum error of frame orthogonality constraint $\mathcal{F}_2(t^n)$ for varying discretization parameters.}
    \label{tab:model-3d-frame-mismatch}

    \clearpage
\begin{tabular}{c|c|c|c}
$\Delta t$ & $N$ & $\max_n \mathcal{F}_2(t^n)$ & $\max_n (\mathcal{F}_2(t^n) - \mathcal{F}_2(t^{n-1}))$ \\
\hline
$1.00000$ & 16 & $4.97680 \cdot 10^{-15}$ & $9.80739 \cdot 10^{-16}$ \\
$2.50000 \cdot 10^{-1}$ & 32 & $5.40303 \cdot 10^{-15}$ & $1.86220 \cdot 10^{-16}$ \\
$6.25000 \cdot 10^{-2}$ & 64 & $1.29219 \cdot 10^{-14}$ & $9.85533 \cdot 10^{-17}$ \\
$1.56250 \cdot 10^{-2}$ & 128 & $3.86360 \cdot 10^{-14}$ & $8.62882 \cdot 10^{-17}$ \\
$3.90625 \cdot 10^{-3}$ & 256 & $1.26807 \cdot 10^{-13}$ & $5.54277 \cdot 10^{-17}$ \\
$9.76562 \cdot 10^{-4}$ & 512 & $4.44560 \cdot 10^{-13}$ & $4.39604 \cdot 10^{-17}$ \\
\end{tabular}

  \end{subfigure}
}
\end{table}

\section{Discussion}
\label{sec:discussion}

We have presented a new finite element scheme for viscoelastic rods suitable for simulating undulatory locomotion in three dimensions.
We have shown analytically that the semi-discrete problem preserves the energy structure of the continuous problem and that the scheme preserves geometric constraints exactly (up to machine precision in practical computations).
Further our numerical experiments show, first, that the analytic properties are realized in a practical fully discrete scheme and that we can capture simple two and three dimensional undulatory behaviours.
Although our method is not the first to tackle this problem, we believe the theoretical grounding and our numerical experiments demonstrate that our method is very well suited for simulations of undulatory locomotion.

We hope in future work to be able to extend the analysis to show stability at the fully discrete level or other convergence properties.
Stability of similar schemes for second order geometric flow curve shortening flow based on parametric representations has been shown at the fully discrete level often by using a non-degenerate re-parameterization of the underlying partial differential equation (e.g.\ \cite{Dzi90,Barrett2011,Elliott2016} and references therein).
The basis of our numerical scheme are the papers \cite{DziKuwSch02,Lin2004} for which there exists numerical analysis of the semi-discrete problem \cite{DecDzi08} but not the fully discrete problem.
There is however further analysis available when alternative parametric schemes are used.
The author of \cite{Bar13} uses a local length constraint for the elastic curve problem in a finite element scheme using B\'ezier curves.
The resulting scheme is unconditionally stable at the discrete level.

In order to apply the method to derive insights for undulatory locomotion, especially for \textit{C.\ elegans}, we must try to understand both material parameters and control mechanisms.
The discussion in \cite{CohRan17-pp} of material parameters for a two-dimensional model still holds true and in three dimensions there are further parameters to explore.
We must also try to understand what gaits are possible and try to find their underlying mechanisms.
For example, the authors of \cite{Bilbao2018} propose a roll manoeuvre in \textit{C.\ elegans} called a ``torsion turn''.
Work is required to understand what control mechanisms would allow such a behaviour.

There are also many details which we have chosen not to consider in our model.
The largest simplification is the use of resistive force theory (RFT).
RFT is a simple first order approximation to the full fluid dynamics which could appear in our model.
The drag coefficients have been experimentally verified for \textit{C.\ elegans} simple forwards locomotion in a variety of media \cite{BerBoyTas09},
although other studies have suggested nonlinear, but still local, drag coefficients give a more accurate representation of the underlying physics \cite{Rabets2014}.
Alternative approaches include using slender body theory \cite{Keller1976,Lighthill1975} which gives a second order non-local correction to RFT or solving the full fluid structure interaction problem using embedded filaments (e.g. \cite{Cortez2018,schoeller2019methods}, a boundary element method \cite{MontenegroJohnson2016} or an immersed boundary method \cite{Lim2012}.
However the RFT approach is crucial in allowing us to solve such a simple one-dimensional system of equations but the model does not hold when non-local interactions become important such as in turning manoeuvres, mating or undulations near walls.
The model and numerical scheme would need to be extended in order to capture these situations.

\section*{Acknowledgements}

The author would like to thank Netta Cohen and Felix Salfelder for discussions which have improved the writing of the manuscript and the computer implementation of the method. I would also like to thank the anonymous reviewers for their careful consideration and helpful comments.

\section*{Declarations of interest}
None

\begin{appendix}

\section{A related two dimensional problem}
\label{sec:2d-problem}

The numerical method presented in the main part of this paper may also be used to understand a related two dimensional scenario.
We consider the problem of a three dimensional rod embedded in a plane without twist - this is the relevant model for nematode locomotion when restricted to two dimensions where the nematode lies on a plate on its left or right side.
This can be used to develop a simpler numerical scheme for these scenarios as we no longer need to solve for the frame and can simply compute the evolution of the midline only.
Indeed we demonstrate numerically that we recover the same results either using the numerical scheme in the main paper or the restricted version here.
The restricted two dimensional scheme here also has the advantage of reducing the numbers of degrees of freedom. In practice this leads to a speed up of simulations by at least a factor of 2 for fine discretizations.

This two dimensional method extends the method previously presented in \cite{CohRan17-pp}.
Here, we include the viscous as well as elastic contribution to the moment.

\subsection{The model}

We consider the same model as in \cref{sec:model} except now we impose that the parameterization of the midline lies in the plane $\vec{x} \in \R^2 \times \{0\}$ and the frame has zero twist ($\gamma \equiv 0$).
For the orthonormal frame, we can use $\vec{e}^{0} \equiv \vec\tau, \vec{e}^{1} \equiv \vec\nu := \vec\tau^\perp, \vec{e}^{2} := ( 0, 0, 1 )$, where $(\cdot)^\perp$ denotes rotation by $\pi/2$ about $\vec{e}^{2}$.
This orthonormal frame has zero twist ($\gamma \equiv 0$) and we have zero curvature in the $\vec{e}_2$ direction $(\beta \equiv 0)$.
Therefore we can derive an appropriate model by replacing \cref{eq:moment} by
\begin{align*}
  \vec{M} = \vec\tau \times \Bigl\{ A \bigl( ( \alpha - \alpha^0 ) \vec\nu \bigr)
  + B \alpha_t \vec\nu \Bigr\}
  = \big\{ A ( \alpha - \alpha^0 ) + B \alpha_t \} \vec{e}_2.
\end{align*}
More compactly, we can write
\[
  \vec{M} = \vec\tau \times \vec{y}, \qquad \vec{y} =  \big\{ A ( \alpha - \alpha^0 ) + B \alpha_t \} \vec{e}_2.
\]
Then \cref{eq:model} may be replaced by a two dimensional version:
\begin{subequations}
  \label{eq:model-2d}
  \begin{align}
    \label{eq:model-2d-x}
    \K \vec{x}_t + \frac{1}{| \vec{x}_u |} \bigl( p \vec\tau \bigr)_u
    + \frac{1}{|\vec{x}_u|} ( \id - \vec\tau \otimes \vec\tau) \frac{ \vec{y}_u }{| \vec{x}_u |} & = \vec{0}
    \\
    \label{eq:model-2d-p}
    \vec\tau \cdot \vec{x}_{tu} & = 0.
  \end{align}
  Here $\id$ is now the $2 \times 2$ identity matrix and $\otimes$ is the outer product given by $( \vec{a} \otimes \vec{b} )_{ij} = \vec{a}_i \vec{b}_j$ for $i,j=1,2$, $\vec{a}, \vec{b} \in \R^2$.
  For boundary conditions we assume zero force and zero moment at $u=0,1$. That is
  \begin{align}
    \label{eq:model-2d-bc0}
    p \vec\tau
    + ( \id - \vec\tau \otimes \vec\tau) \frac{\vec{y}_u}{| \vec{x}_u |} & = \vec{0}
    && \mbox{ for } u = 0,1 \\
    \label{eq:model-2d-bc1}
    \vec\tau \times \vec{y} & = \vec{0}
                              && \mbox{ for } u = 0,1.
  \end{align}
\end{subequations}

\subsection{Weak form}

The weak form of our two dimensional problem is:

\begin{problem}
  Given a preferred curvature $\alpha^0$ and an initial condition for the parameterization $\vec{x}^0$,
  find $\vec{x} , \vec{y}, \vec{\kappa} \colon [0,1] \times [0,T] \to \R^2$ (with the conditions \cref{eq:model-bc0,eq:model-bc1,eq:model-w-b} at the boundaries), $p \colon [0,1] \times [0,T] \to \R$,
  such that, for almost every $t \in (0,T)$:
  \begin{subequations}
    \label{eq:weak-2d}
    \begin{align}
      \label{eq:weak-2d-x}
      \int_0^1 \K \vec{x}_t \cdot \vec{\phi} | \vec{x}_u |
      - \int_0^1 p \vec\tau \cdot \vec{\phi}_u
      - \int_0^1 \bigl( ( \id - \vec\tau \otimes \vec\tau ) \frac{1}{| \vec{x}_u |} \vec{y}_u  \bigr) \cdot \vec{\phi}_u
      & = 0 \\
      \label{eq:weak-2d-y}
      \int_0^1 \bigl( \vec{y} - A ( \vec{\kappa} - \alpha^0 \vec{\nu} )
      - B ( \id - {\vec\tau} \otimes {\vec\tau} ) \vec{\kappa}_{t}
      \bigr) \abs{ \vec{x}_{u} }
      & = 0 \\
      \label{eq:weak-2d-w}
      \int_0^1 \vec{\kappa} \cdot \vec\psi | \vec{x}_u | + \frac{\vec{x}_u}{ | \vec{x}_u | } \cdot \vec\psi_u & = 0
    \end{align}
    for all $\vec\phi \in V^2, \vec\psi \in V_0^2$,
    \begin{align}
      \label{eq:weak-2d-p}
      \int_0^1 q \vec\tau \cdot \vec{x}_{tu} & = 0,
    \end{align}
    for all $q \in Q$,
  \end{subequations}
subject to the initial condition
\begin{equation}
  \vec{x}( \cdot, 0) = \vec{x}^0, \qquad
\end{equation}
and initial equation
\begin{align}
  \int_0^1 \vec{\kappa}(\cdot, 0) \cdot \vec\psi | \vec{x}^0_u |
  + \frac{\vec{x}^0_u}{ | \vec{x}^0_u | } \cdot \vec\psi_u
  & = 0 && \mbox{ for all } \vec\psi \in V_0^2.
\end{align}
\end{problem}

\subsection{Numerical method}

\subsubsection{Semi-implicit scheme}

Our approach follows the same steps as \cref{sec:method}. We again use the spaces of piecewise linear and piecewise constant spaces $V_h$ and $Q_h$ and the two approximations of the tangent vector \cref{eq:tauh-1,eq:tauh-2}. We use the second of these definitions to define an approximation to the normal vector $\vec\nu_h := \tilde{\vec\tau}_h^\perp$.
We make the same choice of approximation spaces as before and arrive at the scheme:

\begin{problem}
  Given a preferred curvature $\alpha^0$, and an initial condition for parameterization $\vec{x}_h$, for $t \in [0,T]$,
  find $\vec{x}_h( \cdot, t) \in V_h^2, \vec{y}_h( \cdot, t) \in V_{h,0}^2, \vec{\kappa}_h( \cdot, t ) \in V_{h,0}^2 + \vec{\kappa}_b(\cdot,t)$,
  $p_h( \cdot, t ) \in Q_h$, $\vec{\nu}_{h}( \cdot, t ) = \tilde{\vec\tau}_h^\perp \in V_h^2$ such that, for all $t \in (0,T)$:
  \begin{subequations}
    \begin{align}
      \label{eq:fem-2d-x}
      \int_0^1 \K \vec{x}_{h,t} \cdot \vec{\phi}_h | \vec{x}_{h,u} |
      - \int_0^1 p_h \vec\tau_h \cdot \vec{\phi}_{h,u}
      \qquad\qquad\qquad\qquad\qquad & \\
      \nonumber
      - \int_0^1 \bigl( ( \id - \vec\tau_h \otimes \vec\tau_h ) \frac{\vec{y}_{h,u}}{| \vec{x}_{h,u} |}  \bigr) \cdot \vec{\phi}_{h,u}
      & = 0 \\
    \label{eq:fem-2d-y}
      \int_0^1 \Bigl( ( \vec{y}_h \cdot \vec\psi_h )_h
      - ( A \vec{\kappa}_h \cdot \vec\psi_h )_h
      + \bigl( A \alpha^0 \vec{\nu}_{h} \cdot \vec{\psi}_h \bigr)_h
      \qquad\qquad\quad & \\
      \nonumber
      - \left(
      B ( \id - \tilde{\vec\tau}_h \otimes \tilde{\vec\tau}_h ) \vec{\kappa}_{h,t}
      \right)_h
      \Bigr) \abs{ \vec{x}_{h,u} } & = 0 \\
      \label{eq:fem-2d-w}
      \int_0^1 \vec{\kappa}_h \cdot \vec\psi_h | \vec{x}_{h,u} | + \frac{\vec{x}_{h,u}}{ | \vec{x}_{h,u} | }  \cdot \vec\psi_{h,u} & = 0
    \end{align}
    for all $\vec\phi_h \in V_h^2$, $\vec\psi_h \in V_{h,0}^2$,
    \begin{align}
      \label{eq:fem-2d-p}
      \int_0^1 q_h \vec\tau_h \cdot \vec{x}_{h,tu} & = 0,
    \end{align}
    for all $q_h \in Q_h$,
  \end{subequations}
subject to the initial condition:
\begin{equation}
  \label{eq:fem2d-initial}
  \vec{x}_h(\cdot, 0) = \vec{x}_{h}^0,
\end{equation}
and initial equation:
\begin{align}
  \int_0^1 \vec{\kappa_h}(\cdot, 0) \cdot \vec\psi_h | \vec{x}^0_{h,u} |
  + \frac{\vec{x}^0_{h,u}}{ | \vec{x}^0_{h,u} | } \cdot \vec\psi_{h,u}
  & = 0 && \mbox{ for all } \vec\psi_h \in V_{h,0}^2.
 \end{align}
\end{problem}

The stability result can be shown in the same way. We do not show the proof here.

\begin{lemma}
  \label{lem:stability-2d}
  If $\alpha^0$ is independent of time,
  any solution to the above problem satisfies:
  \begin{multline*}
    \int_0^1 \K \vec{x}_{h,t} \cdot \vec{x}_{h,t} | \vec{x}_{h,u} |
    + \frac{1}{2} \ddt \int_0^1 \bigl( A ( \alpha_h - \alpha^0 )^2 \bigr)_h | \vec{x}_{h,u} |
    + \int_0^1 ( B \alpha_{h,t}^2 )_h \abs{ \vec{x}_{h,u}} = 0.
  \end{multline*}
\end{lemma}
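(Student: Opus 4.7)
The plan is to follow the proof of \cref{lem:stability} (the three dimensional case) with all terms involving the twist $\gamma$, the tangential angular velocity $m$, and the tangential moment $z$ removed. As a preliminary, testing \cref{eq:fem-2d-p} against the characteristic function of an element gives the vertex-wise identity $\abs{\vec{x}_{h,u}}_t = 0$, so that the length element commutes freely with $\ddt$.

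The core energy combination mirrors \cref{eq:stab-1,eq:stab-2,eq:stab-5} in the three dimensional proof. First I would test \cref{eq:fem-2d-x} with $\vec{\phi}_h = \vec{x}_{h,t}$ and \cref{eq:fem-2d-p} with $q_h = p_h$ and add, cancelling the Lagrange-multiplier term. Separately, I would differentiate \cref{eq:fem-2d-w} in time, test with $\vec\psi_h = \vec{y}_h$, and subtract \cref{eq:fem-2d-y} tested with $\vec\psi_h = \vec{\kappa}_{h,t}$. The mixed $(\id - \vec\tau_h \otimes \vec\tau_h) \vec{y}_{h,u}/\abs{\vec{x}_{h,u}} \cdot \vec{x}_{h,tu}$ contributions then cancel between the two identities, producing
\[
  \int_0^1 \K \vec{x}_{h,t} \cdot \vec{x}_{h,t} \abs{\vec{x}_{h,u}}
  + \int_0^1 \bigl( A(\vec{\kappa}_h - \alpha^0 \vec\nu_h) \cdot \vec{\kappa}_{h,t}
  + B (\id - \tilde{\vec\tau}_h \otimes \tilde{\vec\tau}_h) \vec{\kappa}_{h,t} \cdot \vec{\kappa}_{h,t} \bigr)_h \abs{\vec{x}_{h,u}} = 0.
\]

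The main obstacle is identifying, at each mesh vertex, the two integrands with the scalars $(\alpha_h - \alpha^0)\alpha_{h,t}$ and $\alpha_{h,t}^2$ respectively. To do this I would use the vertex-wise decomposition $\vec{\kappa}_h = \alpha_h \vec\nu_h$, valid on the interior by the analogue of \cref{eq:fem-w-decomp} and at the endpoints by the boundary condition $\vec\kappa_b = \alpha^0 \vec\nu_h$. Differentiating in time and exploiting the two dimensional fact that $|\tilde{\vec\tau}_h|=|\vec\nu_h|=1$ at vertices forces $\tilde{\vec\tau}_{h,t}$ to be a scalar multiple of $\vec\nu_h$ and $\vec\nu_{h,t}$ to be a scalar multiple of $-\tilde{\vec\tau}_h$. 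Thus $\vec{\kappa}_{h,t} = \alpha_{h,t} \vec\nu_h + \alpha_h \vec\nu_{h,t}$, and the projection $\id - \tilde{\vec\tau}_h \otimes \tilde{\vec\tau}_h$ annihilates the second term in the viscous contribution while $\vec\nu_h \cdot \vec\nu_{h,t} = 0$ removes it from the elastic contribution.

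Finally, since $\alpha^0$ is time-independent, $A(\alpha_h - \alpha^0)\alpha_{h,t} = \tfrac12 \ddt [A(\alpha_h - \alpha^0)^2]$, and the mass-lumped spatial integral commutes with $\ddt$ by the preliminary step, yielding the claimed identity.
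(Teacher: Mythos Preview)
Your proposal is correct and follows exactly the route the paper indicates (``shown in the same way'' as \cref{lem:stability}, with the $\gamma$, $m$, $z$ machinery stripped out). The two-dimensional observation that $\vec\nu_{h,t}$ is a scalar multiple of $\tilde{\vec\tau}_h$ is precisely the simplification that makes the extra test-function trick from \cref{eq:stab-5} unnecessary here, so your shortcut in the vertex-wise identification is the natural 2D reduction of the 3D argument rather than a different approach.
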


\subsubsection{Fully discrete scheme}

\begin{problem}
  Given a preferred curvature $\alpha^0$, and an initial condition for the parameterization $\vec{x}_h^0$,
  first define $\vec{\kappa}_{h}^0 \in V_{h,0}^2$ and as the solutions of
  \begin{align*}
  \int_0^1 \vec{\kappa}_h^0 \cdot \vec\psi_h | \vec{x}^0_{h,u} |
  + \frac{\vec{x}^0_{h,u}}{ | \vec{x}^0_{h,u} | } \cdot \vec\psi_{h,u}
  & = 0 && \mbox{ for all } \vec\psi \in V_{h,0}^2.
  \end{align*}
  Then,
  for $n = 1, \ldots, M$,
  find $\vec{x}_h^n \in V_h^2, \vec{y}_h^n \in V_{h,0}^2, \vec{\kappa}_h^n \in V_{h,0}^2 + \vec{\kappa}_{b}^n$,
  $p_h^n \in Q_h$ such that
  \begin{subequations}
    \label{eq:discrete-2d}
    \begin{multline}
      \label{eq:discrete-2d-x}
      \int_0^1 \K \bar\partial \vec{x}_{h}^n \cdot \vec{\phi}_h | \vec{x}_{h,u}^{n-1} |
      - \int_0^1 p_h^n \vec\tau_h \cdot \vec{\phi}_{h,u} \\
      - \int_0^1 ( \id - \vec\tau_h^{n-1} \otimes \vec\tau_h^{n-1} ) \frac{1}{| \vec{x}_{h,u}^{n-1} |} \vec{y}_{h,u}^n  \cdot \vec{\phi}_{h,u}
      = 0,
    \end{multline}
    for all $\vec\phi_h \in V_h^2$,
    \begin{align}
      \label{eq:discrete-2d-y}
      \int_0^1 \bigl( \vec{y}_h^{n} - A ( \vec{\kappa}_h^n - \alpha^0( \cdot, t^n ) \vec{\nu}_{h}^{n-1} ) \qquad\qquad\qquad \\
      \nonumber
      - B \bigl( ( \id - \tilde{\vec\tau}_h^{n-1} \otimes \tilde{\vec\tau}_h^{n-1} ) \bar\partial \vec{\kappa}_{h}^n \bigr)_h \bigr) \cdot \vec\psi_h | \vec{x}_{h,u}^{n-1} | & = 0 \\
      \label{eq:discrete-2d-w}
      \int_0^1 \vec{\kappa}_h^{n} \cdot \vec\psi_h | \vec{x}_{h,u}^{n-1} | + \frac{1}{ | \vec{x}_{h,u}^{n-1} | } \vec{x}_{h,u}^n \cdot \vec\psi_{h,u} & = 0
    \end{align}
    for all $\vec\psi_h \in V_{h,0}^2$,
    \begin{align}
      \label{eq:discrete-2d-p}
      \int_0^1 q_h \vec\tau_h^{n-1} \cdot \vec{x}_{h,u}^n & = \int_0^1 | \vec{x}_{h,0,u} | q_h,
    \end{align}
    for all $q_h \in Q_h$.
  \end{subequations}
\end{problem}

We see that the treatment of the length constraint has not changed so we immediately recover the result of \cref{lem:length}.
Furthermore, we can find the frame without solving any more equations so that we preserve the frame orthogonality exactly.

\subsection{Numerical results}

We demonstrate the similarities between the two and three dimensional versions of the models by simulating the two dimensional test case from the main paper. We use the same refinement strategy as the main paper also.
First we compute the centre of mass at the final time step for the two dimensional scheme \cref{eq:discrete-2d} and the three dimensional scheme \cref{eq:discrete} and show the norm of the difference in \cref{tab:2d3d-com-change}.
We see that the error at the final time is very small and is approximately machine precision epsilon per time step.
This demonstrates that the schemes are exactly the same up to rounding errors.
Second, we compare the how long each simulation takes to run taking the average run time and range across ten simulations. This includes the 5 time unit initial simulation in order to generate the initial condition.
We see that for reasonable levels of resolution the two dimensional version of the code runs in less than half the time of the three dimensional version.

\begin{table}[tbhp]
{\footnotesize
  \centering
  \caption{Comparison of final centre of mass}
  \label{tab:2d3d-com-change}

  \clearpage
\begin{tabular}{c|c|c|c}
$\Delta t$ & $N$ & Difference & Difference per time step \\
\hline
$1.00000$ & $16$ & $1.43005 \cdot 10^{-14}$ & $5.72018 \cdot 10^{-16}$ \\
$2.50000 \cdot 10^{-1}$ & $32$ & $1.57213 \cdot 10^{-14}$ & $1.57213 \cdot 10^{-16}$ \\
$6.25000 \cdot 10^{-2}$ & $64$ & $3.45280 \cdot 10^{-14}$ & $8.63199 \cdot 10^{-17}$ \\
$1.56250 \cdot 10^{-2}$ & $128$ & $6.04952 \cdot 10^{-13}$ & $3.78095 \cdot 10^{-16}$ \\
$3.90625 \cdot 10^{-3}$ & $256$ & $1.87147 \cdot 10^{-12}$ & $2.92417 \cdot 10^{-16}$ \\
$9.76562 \cdot 10^{-4}$ & $512$ & $4.87944 \cdot 10^{-13}$ & $1.90603 \cdot 10^{-17}$ \\
\end{tabular}
}
\end{table}
\begin{table}[tbhp]
{\footnotesize
  \centering
  \caption{Comparison of timings}
  
  \clearpage
\begin{tabular}{c|c|c|c|c}
$\Delta t$ & $N$ & Time (2d) & Time (3d) & Ratio \\
\hline
$1.00000$ & $16$ & $9.09700 \cdot 10^{-1}$ $\pm$ $1.87\%$ & $9.25140 \cdot 10^{-1}$ $\pm$ $8.90\%$ & $1.01697$ \\
$2.50000 \cdot 10^{-1}$ & $32$ & $9.43000 \cdot 10^{-1}$ $\pm$ $2.94\%$ & $9.89800 \cdot 10^{-1}$ $\pm$ $3.80\%$ & $1.04963$ \\
$6.25000 \cdot 10^{-2}$ & $64$ & $1.30110$ $\pm$ $3.09\%$ & $1.74390$ $\pm$ $3.24\%$ & $1.34033$ \\
$1.56250 \cdot 10^{-2}$ & $128$ & $3.67120$ $\pm$ $5.04\%$ & $8.17420$ $\pm$ $1.07\%$ & $2.22657$ \\
$3.90625 \cdot 10^{-3}$ & $256$ & $2.21780 \cdot 10^{1}$ $\pm$ $1.20\%$ & $5.15220 \cdot 10^{1}$ $\pm$ $3.20\%$ & $2.32311$ \\
$9.76562 \cdot 10^{-4}$ & $512$ & $1.63716 \cdot 10^{2}$ $\pm$ $6.29\%$ & $3.97100 \cdot 10^{2}$ $\pm$ $3.10\%$ & $2.42554$ \\
\end{tabular}
}
\end{table}

\section{Simulation videos}
\label{sec:videos}

We also attach three videos of simulations presented in the main paper:

\begin{description}
  \item[{Video 1.}] Conformation of the rod during the relaxation test with $\Delta t = 10^{-3}$ and $N=128$ for times in $[0,25]$.
    The colouring is the same as \cref{fig:relaxation-configuration}.
    \url{https://www.youtube.com/watch?v=aih4-yPn0lk}

  \item[{Video 2.}] Conformation of the body during the two dimensional locomotion test with $\Delta t = 10^{-3}$ and $N=128$ for times in $[0,25]$.
    The colouring is the same as \cref{fig:worm-configurations}(i).
    \url{https://www.youtube.com/watch?v=mWtLNb93RGY}

  \item[{Video 3.}] Conformation of the body during the three dimensional locomotion test with $\Delta t = 10^{-3}$ and $N=128$ for times in $[0,25]$.
    The colouring is the same as \cref{fig:worm-configurations}(ii).
    \url{https://www.youtube.com/watch?v=FO8opOIykLs}
  \end{description}

\end{appendix}

\end{document}